\newcommand{\cB}{\mathcal{B}}
\newcommand{\Adual}{A^{\vee}}
  \newcommand{\textcyr}[1]{%
    {\fontencoding{OT2}\fontfamily{wncyr}\fontseries{m}\fontshape{n}%
     \selectfont #1}}
\newcommand{\Sha}{{\mbox{\textcyr{Sh}}}}
\newcommand{\cA}{\mathcal{A}}
\newcommand{\hra}{\hookrightarrow}
\newcommand{\ra}{\rightarrow}
\newcommand{\lra}{\longrightarrow}
\newcommand{\tensor}{\otimes}
\newcommand{\ssstyle}{\scriptscriptstyle}
\newcommand{\Q}{\mathbf{Q}}
\newcommand{\G}{\mathbb{G}}
\newcommand{\Z}{\mathbf{Z}}
\newcommand{\kbar}{\overline{k}}
\newcommand{\isom}{\cong}
\newcommand{\m}{\mathfrak{m}}
\renewcommand{\O}{\mathcal{O}}
\DeclareMathOperator{\coker}{coker}
\DeclareMathOperator{\Br}{Br}
\DeclareMathOperator{\Gal}{Gal}
\DeclareMathOperator{\im}{im}
\DeclareMathOperator{\Norm}{Nm}
\DeclareMathOperator{\Pic}{Pic}
\DeclareMathOperator{\Vis}{Vis}
\DeclareMathOperator{\Res}{Res}
\DeclareMathOperator{\tor}{tor}
\DeclareMathOperator{\Spec}{Spec}
\DeclareMathOperator{\ur}{ur}
\DeclareMathOperator{\Hom}{Hom}
\DeclareMathOperator{\Sel}{Sel}
\DeclareMathOperator{\Ext}{Ext}
\theoremstyle{plain}
\newtheorem{theorem}{Theorem}[section]
\newtheorem{proposition}[theorem]{Proposition}
\newtheorem{corollary}[theorem]{Corollary}
\newtheorem{lemma}[theorem]{Lemma}
\theoremstyle{definition}
\theoremstyle{remark}
\newtheorem{remark}[theorem]{Remark}
\numberwithin{equation}{section}
\newcommand{\thetitle}
{Tamagawa Torsors of an Abelian Variety}
\begin{document}

\title{\thetitle}
\author{Saikat Biswas}
\maketitle

\begin{abstract}
For an abelian variety $A$ over a number field $K$, we define the set of \emph{Tamagawa torsors} of $A$ at a prime $v$ of $K$ to be the set of principal homogeneous spaces of $A$ over the completion $K_v$ of $K$ at $v$ that are split by an unramified extension of $K_v$. In this paper, we study the arithmetic properties of the Tamagawa torsors. We also give, following Mazur's theory of visibility, conditions under which
non-trivial elements of the Tamagawa torsors of $A$ may be interpreted as rational points on another abelian variety $B$.

\end{abstract}

\section*{Introduction}\label{intro}
Let $A$ be an abelian variety defined over a number field $K$. The Shafarevich-Tate group of $A$, denoted by $\Sha(A/K)$, is the set of isomorphism classes of principal homogeneous spaces of $A$ (also called $A$-torsors) defined over $K$ that are split by every completion $K_v $ of $K$, where $v$ is a prime of $K$ (\cite{silverman:AEC}). In other words, the set of non-trivial $A$-torsors\footnote{The \emph{trivial} $A$-torsors are isomorphic to $A$ over $K$} in $\Sha(A/K)$ have a $K_v$-rational point for every prime $v$ but do not have any $K$-rational points. The group $\Sha(A/K)$ is a fundamental arithmetic invariant of $A/K$ and its conjectural finiteness can be potentially used in effectively determining the set of rational points $A(K)$. In this paper, we define a new arithmetic invariant of $A$. Specifically, we consider the set of $A$-torsors defined over $K_v$ that are split by an unramified extension of $K_v$. We call this the set of \emph{Tamagawa torsors} of $A$ at $v$ and denote it by $TT(A/K_v)$. The terminology is rather prosaically explained by the fact (proved in Section 2) that $TT(A/K_v)$ is a finite set of order $c_{\ssstyle{A},v}$, the usual \emph{Tamagawa number} of $A$ at $v$ (\cite{silverman:AEC}).\footnote{We do welcome suggestions for a different terminology and/or notation.} As we explain presently, this is a useful way of interpreting the Tamagawa number of $A$ at $v$, particularly in the context of the Birch and Swinnerton-Dyer (BSD) Conjecture. In this paper, we study some of the arithmetic properties of $TT(A/K_v)$ including its duality properties as well as its relationship to other invariants of $A/K$ such as the Selmer group of $A$ and $\Sha(A/K)$. We also show how to construct non-trivial elements of $TT(A/K_v)$ via the rational points of another variety $B$ defined over $K$.\\Tamagawa torsors correspond to locally unramified cohomology classes and these have been fairly well-studied in the literature. Consequently, most of the theorems discussed in this paper are simple reinterpretations of known results. In particular, the idea of interpreting the Tamagawa number in terms of torsors is evident in \cite{mazur:towers} as well as \cite{milne:duality} and directly stated in \cite{stein:tamagawa}. The splitting property of these torsors is mentioned explicitly in \cite{gonzalez}. Our aim has been to unify the somewhat diverse results by interpreting the Tamagawa number in terms of principal homogeneous spaces.\\
We now briely describe the contents and organization of this paper. In Section 1, we present some definitions and results involving the N{\'e}ron model of $A$. In Section 2, we define Tamagawa torsors and prove its basic finiteness property. In Section 3, we study the duality properties of Tamagawa torsors. In Section 4, we relate the Tamagawa torsors to the Brauer group of the field over which it is defined, and consequently associate a Severi-Brauer variety as well as a Galois extension to a Tamagawa torsor. In Section 5, we relate Tamagawa torsors to Selmer groups. Finally, in Section 6, we study visibility properties of Tamagawa torsors. We first show that every Tamagawa torsor is `visible' in some ambient variety. We then prove an extension of a theorem of Mazur by means of which Tamagawa torsors of $A$ can be interpreted as rational points on another abelian variety $B$ also defined over $K$.

\noindent {\it Acknowledgements:} 
I thank Douglas Ulmer for his helpful comments and suggestions on an earlier draft. I also thank Dino Lorenzini for explaining some technical details pertaining to some of the results in this paper, and also for directing me to the relevant literature. Finally, I thank Amod Agashe, my advisor, for his encouragement and support.

\section{Basic Definitions}
Let $K$ be a number field with ring of integers $\O_K$ and let $A/K$ be an abelian variety over $K$. Let $\cA$ 
denote the \emph{N{\'e}ron model} of $A/K$ over $X=\Spec{\O_K}$ (\cite{neronmodels}). Thus $\cA$ is separated and of finite type over $X$ with generic fiber $A/K$, and satisfies the \emph{N{\'e}ron mapping property}: for each smooth $X$-scheme $S$ with generic fiber $S_{K}$, the restriction map $${\Hom_{X}(S,{\cA})\ra\Hom_{K}(S_{K},A)}$$ is bijective. Alternatively, recalling (\cite[\S II.1]{milne:etale}) that $A$ defines a sheaf (which we also denote as $A$) for the {\'e}tale (or the flat {\it{fpqf}}) topology over $\Spec{K}$, we consider the direct-image sheaf $j_{*}(A)$ for the {\'e}tale (or {\it{fpqf}}) topology over $X=\Spec{\O_K}$, where $j:\Spec{K}\hra{X}$ is the inclusion of the generic point. When the sheaf $j_{*}(A)$ is \emph{representable}, the smooth scheme $\cA\to X$ representing $j_{*}(A)$ will be called a N{\'e}ron model of $A$.\footnote{It is a deep theorem of N{\'e}ron that $\cA$ exists, up to isomorphism} Abusing notation, we identify the functor $j_{*}(A)$ itself as the N{\'e}ron model of $A$ and write $\cA=j_{*}(A)$. 
Over the flat topology for $X$, we have a short exact sequence of group schemes (or sheaves): 
\begin{equation}\label{SES}
0\to \cA^{0} \to {\cA} \to {\Phi}_A \to {0}
\end{equation}
where $\cA^{0}$ is the largest open subgroup scheme of $\cA$ with connected fibers (also called the \emph{identity component})and ${\Phi}_A=\cA/\cA^{0}$ is the \emph{component group} of $A$. If we regard $\Phi_A$ as an {\'e}tale sheaf over $X$ and denote by $\Phi_{A,v}$ its stalk at a prime $v$, then $\Phi_{A,v}$ can be considered as a finite, {\'e}tale group scheme over $\Spec{k_v}$. Equivalently, $\Phi_{A,v}$ is a finite abelian group equipped with an action of $\Gal({\kbar_v}/{k_v})$, where $k_v$ is the residue field of $K_v$ . Over $\Spec{k_v}$, we thus have an exact sequence of group schemes 
\begin{equation}\nonumber
0 \to \cA_v^0 \to \cA_v \to \Phi_{A,v} \to 0
\end{equation}
The group scheme $\Phi_{A,v}={\cA_{v}}/{\cA_{v}^{0}}$ of connected components is called the \emph{component group} of $\cA$ at $v$ and $c_{\ssstyle{A},v}=\#{\Phi_{A,v}(k_v)}$ is called the \emph{Tamagawa number} of $A$ at $v$. Considering the natural closed immersion $i_{v}:\Spec{k_v}\hra{X}$, we find that 
\begin{equation}\label{compdef}
\Phi_A=\bigoplus_{v}(i_{v})_{*}{{\Phi}_{A,v}}
\end{equation}
where the direct sum is taken over all $v$ or equivalently, over the finite set of $v$ where $A$ has bad reduction.
The short exact sequence (\ref{SES}) of {\'e}tale (or, flat) sheaves over $X$ induces a long exact sequence of {\'e}tale (or flat) cohomology groups:
\begin{equation}\label{LES}
0 \to \cA^{0}(X) \to \cA(X) \to \Phi_A(X)
\to H^1(X,\cA^0) \to H^1(X,\cA) \to H^1(X,\Phi_A)
\end{equation}
where we can write, for all $i$,
\begin{equation}\label{component}
H^i(X,\Phi_A)=\bigoplus_{v}H^i(\Spec{k_v},\Phi_{A,v})
\end{equation}
which follows from (\ref{compdef}). The N{\'e}ron mapping property implies that $$\cA(X)\isom A(K)$$ 
Furthermore, it is also known (see the Appendix to \cite{mazur:towers}) that, staying away from $2$-primary components, 
\begin{equation} \nonumber \label{def:sha}
\Sha(A/K)\isom\im\left[H^1(X,\cA^{0})\lra H^1(X,\cA)\right]
\end{equation}
where $\Sha(A/K)$ is the Shafarevich-Tate group of $A$.
Thus, the exact sequence (\ref{LES}) becomes
\begin{equation}\label{mainseq}
0 \to \cA^0(X) \to A(K) \to \bigoplus_{v}\Phi_{A,v}(k_v) \to H^1(X,\cA^0) \to \Sha(A/K) \to 0
\end{equation}
In particular, staying away from $2$-primary components, we note that the group $\Sha(A/K)$ may be expressed in two ways. 
First, the sequence 
\begin{equation}
\bigoplus_{v}\Phi_{A,v}(k_v) \to H^1(X,\cA^0) \to \Sha(A/K) \to 0
\end{equation}
identifies $\Sha(A/K)$ as a cokernel. Secondly, the sequence 
\begin{equation}\label{kernelsha}
0 \to \Sha(A/K) \to H^1(X,\cA) \to \bigoplus_{v}H^1(\Spec{k_v},\Phi_{A,v}) 
\end{equation}
identifies $\Sha(A/K)$ as a kernel.

\section{Tamagawa Torsors}

Let $A/K_v$ be the abelian variety defined over the completion $K_v$ of $K$ at a prime $v$. Let $K_v^{\ur}$ be the
maximal unramified extension of $K_v$. The inclusion $\Gal({\overline{K_v}}/K_v^{\ur})\subset\Gal({\overline{K_v}}/K_v)$
induces a map
$$H^1(K_v,A) \lra H^1(K_v^{\ur},A)$$
whose kernel corresponds to the \emph{unramified} subgroup of $H^1(K_v,A)$.
The map may also be given as 
$$WC(A/K_v) \lra WC(A/K_v^{\ur})$$
where $WC(A/K_v)\isom H^1(K_v,A)$ denotes the Weil-Ch{\^a}telet group of $A$ over $K_v$. In this situation, the kernel corresponds to the set of principal homogeneous spaces of $A$(or $A$-torsors) defined over $K_v$ that are split by $K_v^{\ur}$. Since $K_v^{\ur}$ is the directed union of finite unramified Galois extensions of $K_v$, it follows that the kernel corresponds to non-trivial $A$-torsors over $K_v$ that do not have a $K_v$-rational point but have a $L_i$-rational point for some finite unramified Galois extension $L_i$ of $K_v$. We denote this set by $TT(A/K_v)$ and call it the set of \emph{Tamagawa torsors} of $A$ at $v$. In particular, $TT(A/K_v)$ is the subgroup of unramified cohomology classes in $H^1(K_v,A)$.
In order to analyze the group $TT(A/K_v)$, we begin with the following proposition.

\begin{proposition}\label{ttprop}
Let $L_i$ be a finite, unramified Galois extension of $K_v$ with residue field $l_i$, and let $A/K_v$ be an abelian variety over $K_v$. Then
$$H^1(L_i/K_v,A(L_i))\isom H^1(l_i/k_v,\Phi_{A,v}(l_i))$$
\end{proposition}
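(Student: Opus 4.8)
The plan is to exploit the Néron mapping property applied over the ring of integers $\O_{L_i}$ of the unramified extension $L_i$. Since $L_i/K_v$ is unramified, $\O_{L_i}$ is the integral closure of the complete DVR $\O_{K_v}$, the residue field extension $l_i/k_v$ is Galois with $\Gal(l_i/k_v)\isom\Gal(L_i/K_v)$, and the Néron model of $A/L_i$ is $\cA\tensor_{\O_{K_v}}\O_{L_i}$. Applying the mapping property to the smooth $\O_{L_i}$-scheme $\Spec{\O_{L_i}}$ (and to $\Spec{\O_{L_i}}$ with its $\Gal(L_i/K_v)$-action), one gets a $\Gal(L_i/K_v)$-equivariant isomorphism $\cA(\O_{L_i})\isom A(L_i)$. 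So it suffices to show $H^1(L_i/K_v,\cA(\O_{L_i}))\isom H^1(l_i/k_v,\Phi_{A,v}(l_i))$.

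First I would reduce modulo the maximal ideal. The reduction map gives a short exact sequence of $\Gal(L_i/K_v)$-modules
\begin{equation}\nonumber
0\to\cA^0(\O_{L_i})\to\cA(\O_{L_i})\to\Phi_{A,v}(l_i)\to0,
\end{equation}
where surjectivity on the right uses that $\cA\to X$ (hence $\cA_v\to\Spec k_v$) is smooth, so smooth points lift by Hensel's lemma applied over the complete local ring $\O_{L_i}$, and the identification of $\cA(\O_{L_i})/\cA^0(\O_{L_i})$ with $\Phi_{A,v}(l_i)$ comes from $\Phi_{A,v}$ being étale over $k_v$ so that $\Phi_{A,v}(\O_{L_i})=\Phi_{A,v}(l_i)$. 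Taking the associated long exact sequence of $\Gal(L_i/K_v)$-cohomology, the claim follows provided $H^1(L_i/K_v,\cA^0(\O_{L_i}))=0$ and $H^0$ is handled — concretely, once we know $\cA^0(\O_{L_i})$ has trivial $H^1$ and that $\cA^0(\O_{L_i})^{\Gal}=\cA^0(\O_{K_v})$ surjects appropriately, the map $H^1(l_i/k_v,\Phi_{A,v}(l_i))\to H^1(L_i/K_v,\cA(\O_{L_i}))$ coming from the connecting-map-free part of the sequence is an isomorphism.

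The main obstacle is the vanishing $H^1(L_i/K_v,\cA^0(\O_{L_i}))=0$. I would prove this by a filtration/Hensel argument: $\cA^0(\O_{L_i})$ is filtered by the congruence subgroups $\cA^0_n(\O_{L_i})=\ker(\cA^0(\O_{L_i})\to\cA^0(\O_{L_i}/\m_{L_i}^n))$, whose successive quotients are isomorphic as $\Gal(L_i/K_v)$-modules to $\Lie(\cA^0)\tensor_{k_v}\m_{L_i}^{n}/\m_{L_i}^{n+1}$, i.e. to several copies of $\m_{L_i}^n/\m_{L_i}^{n+1}\isom l_i(n)$ as a Galois module (a twist of $l_i$). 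Since $L_i/K_v$ is unramified, each $\m_{L_i}^n/\m_{L_i}^{n+1}$ is a free rank-one $l_i$-module with semilinear Galois action, so by the normal basis theorem (additive Hilbert 90) $H^i(L_i/K_v,\m_{L_i}^n/\m_{L_i}^{n+1})=0$ for $i>0$; the bottom graded piece $\cA^0(l_i)$ contributes $H^1(l_i/k_v,\cA^0_v(l_i))$, and one invokes Lang's theorem (the connected smooth group $\cA^0_v$ over the finite field $k_v$ has $H^1(l_i/k_v,\cA^0_v(l_i))=0$). A limit/completeness argument (the cohomology of a finite group commutes with the inverse limit over the congruence filtration, using that $\cA^0(\O_{L_i})$ is $\m$-adically complete and separated) then gives $H^1(L_i/K_v,\cA^0(\O_{L_i}))=0$, and similarly $H^0$ behaves well, completing the proof.
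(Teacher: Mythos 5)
Your reduction to the vanishing of cohomology of $\cA^0(\O_{L_i})$ follows the same first steps as the paper (Néron mapping property over $\O_{L_i}$, reduction exact sequence, identification $\cA(\O_{L_i})\isom A(L_i)$), but you then diverge: you prove $H^1(G_i,\cA^0(\O_{L_i}))=0$ via the congruence filtration, with the Lie-algebra graded pieces killed by the additive normal basis theorem and the bottom piece by Lang. The paper instead argues directly: a class in $H^1(G_i,\cA^0(\O_{L_i}))$ is represented by an $\cA^0_{X_v}$-torsor $C$, its special fiber is trivial by Lang, and smoothness plus Hensel lifts a $k_v$-point of $C$ to an $\O_{K_v}$-point, so the class dies. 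Your filtration argument is a legitimate alternative for $H^1$, if a bit heavier.

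However, there is a genuine gap. An isomorphism $H^1(G_i,A(L_i))\isom H^1(G_i,\Phi_{A,v}(l_i))$ requires not only that the preceding term $H^1(G_i,\cA^0(\O_{L_i}))$ in the long exact sequence vanish (injectivity) but also that the following term $H^2(G_i,\cA^0(\O_{L_i}))$ vanish, or at least that the connecting map $H^1(G_i,\Phi_{A,v}(l_i))\to H^2(G_i,\cA^0(\O_{L_i}))$ be zero, to get surjectivity. You never establish this; the remarks that ``$H^0$ is handled'' and ``$H^0$ behaves well'' are aimed at the wrong spot (the $H^0$-terms only feed into $H^1(\cA^0)$, which you have already killed), and the displayed map ``$H^1(l_i/k_v,\Phi_{A,v}(l_i))\to H^1(L_i/K_v,\cA(\O_{L_i}))$'' points in the wrong direction. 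The paper handles this step explicitly by showing $H^2(G_i,\cA^0_{X_v}(\O_{L_i}/\m_{L_i}^r))=0$ for all $r$ and then applying a lemma from Serre's \emph{Local Fields} to pass to the limit. Your filtration framework could in principle also yield $H^2=0$: the Lie-algebra pieces are cohomologically trivial by the normal basis theorem, and for the bottom piece $\cA^0_v(l_i)$ one needs, by periodicity of Tate cohomology for the cyclic group $G_i$, that the norm map $\cA^0_v(l_i)\to\cA^0_v(k_v)$ is surjective (another Lang-type argument). But as written you only claim $H^1=0$ for the bottom piece, so the surjectivity half of the proposition is not proved.
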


\begin{proof}
Let $\O_{L_i}$ be the ring of integers of $L_i$ and $\m_{L_i}$ its maximal ideal so that $l_i=\O_{L_i}/{\m_{L_i}}$. In particular, $l_i$ is a finite Galois extension of the finite field $k_v$ and there is a canonical isomorphism
$\Gal(L_i/K_v)\isom\Gal(l_i/k_v)$. Let us denote this group as $G_i$. Let $\cA_{X_v}=\cA{\times}_{X}X_v$, where $X_v=\Spec{\O_{K_v}}$.
In particular, $\cA_{X_v}$ is the N{\'e}ron model of $A/K_v$ over $X_v$. Since $\cA_{X_v}$ is smooth over $X_v$, it follows that the reduction map 
$$\cA_{X_v}(\O_{L_i})\to\cA_{v}(l_i)$$
is surjective (\cite{milne:etale},I \S 4.13). Thus there is an exact sequence of $G_i$-modules
$$0 \to \cA^0_{X_v}(\O_{L_i}) \to \cA_{X_v}(\O_{L_i}) \to \Phi_{A,v}(l_i) \to 0$$
We also find that $\cA_{X_v}\tensor_{X_v}\Spec{\O_{L_i}}$ is the N{\'e}ron model of $A\tensor_{K_v}{L_i}$ so that in particular
$\cA_{X_v}(\O_{L_i})\isom A(L_i)$. The above sequence now becomes
$$0 \to \cA^0_{X_v}(\O_{L_i}) \to A(L_i) \to \Phi_{A,v}(l_i) \to 0$$
It therefore suffices to show that $H^n(G_i,\cA^0_{X_v}(\O_{L_i}))=0$ for $n=1,2$. Let $[\xi]\in H^1(G_i,\cA^0_{X_v}(\O_{L_i}))$ be represented by
an $\cA^0_{X_v}$-torsor $C$. Since $\cA_{v}^0$ is a connected algebraic group over the finite field $k_v$, it follows from Lang's Theorem that
the $\cA_{v}^0$-torsor $C\tensor_{\O_{K_v}}k_v$ is trivial so that $C(k_v)\neq\varnothing$. It now follows from Hensel's lemma that 
$C(\O_{K_v})\neq\varnothing$, and hence $[\xi]=0$.\\
On the other hand, we find that $H^2(G_i,\cA_{X_v}^0(\O_{L_i}/{\m_{L_i}^r}))=0$ for all $r$, since $G_i$ has cohomological dimension $1$. It follows (\cite{serre:localfields} XII, \S 3 Lemma 3) that $H^2(G_i,\cA_{X_v}^0(\O_{L_i}))=0$.
\end{proof}

\begin{theorem}\label{ttorder}
The set $TT(A/K_v)$ is finite and $\#TT(A/K_v)=c_{A,v}$.
\end{theorem}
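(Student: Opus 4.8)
The plan is to realise $TT(A/K_v)$ as the first cohomology of the procyclic group $\Gal(K_v^{\ur}/K_v)$ with coefficients in $A(K_v^{\ur})$, to reduce via Proposition~\ref{ttprop} to the cohomology of $\Gal(\kbar_v/k_v)\isom\widehat{\Z}$ acting on the finite component group $\Phi_{A,v}(\kbar_v)$, and finally to invoke the standard fact that for a finite module over a procyclic group the zeroth and first cohomology groups have the same order.

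First I would apply the inflation--restriction sequence to the normal subgroup $\Gal(\overline{K_v}/K_v^{\ur})\subset\Gal(\overline{K_v}/K_v)$, which gives an exact sequence
$$0\lra H^1(K_v^{\ur}/K_v,A(K_v^{\ur}))\lra H^1(K_v,A)\lra H^1(K_v^{\ur},A).$$
Since $TT(A/K_v)$ is by definition the kernel of the right-hand map, this yields $TT(A/K_v)\isom H^1(K_v^{\ur}/K_v,A(K_v^{\ur}))$. Writing $K_v^{\ur}$ as the directed union of the finite unramified Galois extensions $L_i/K_v$, the module $A(K_v^{\ur})=\varinjlim_i A(L_i)$ is a discrete $\Gal(K_v^{\ur}/K_v)$-module, so continuity of profinite group cohomology gives
$$TT(A/K_v)\isom\varinjlim_i H^1(L_i/K_v,A(L_i)),$$
the transition maps being the inflation maps.

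Next I would feed in Proposition~\ref{ttprop}, which furnishes for each $i$ an isomorphism $H^1(L_i/K_v,A(L_i))\isom H^1(l_i/k_v,\Phi_{A,v}(l_i))$. These are induced on cochains by the reduction map $A(L_i)\to\Phi_{A,v}(l_i)$ appearing in the proof of that proposition (together with the vanishing of the relevant cohomology of the identity component established there), so they are compatible with the inflation maps as $i$ varies. Passing to the limit over the tower of residue fields $l_i$, whose union is $\kbar_v$ and for which $\Phi_{A,v}(l_i)=\Phi_{A,v}(\kbar_v)^{\Gal(\kbar_v/l_i)}$ since $\Phi_{A,v}$ is \'etale, we obtain
$$TT(A/K_v)\isom\varinjlim_i H^1(l_i/k_v,\Phi_{A,v}(l_i))\isom H^1(\kbar_v/k_v,\Phi_{A,v}(\kbar_v)),$$
with $\Gal(\kbar_v/k_v)\isom\widehat{\Z}$ and $M:=\Phi_{A,v}(\kbar_v)$ a finite abelian group carrying a continuous $\widehat{\Z}$-action.

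Finally, letting $\phi$ denote the Frobenius generator of $\widehat{\Z}$, one has $H^0(\widehat{\Z},M)=\ker(\phi-1\colon M\to M)=\Phi_{A,v}(k_v)$ while $H^1(\widehat{\Z},M)=\coker(\phi-1\colon M\to M)$; since $M$ is finite, the endomorphism $\phi-1$ of $M$ has kernel and cokernel of equal cardinality, whence
$$\#TT(A/K_v)=\#H^1(\widehat{\Z},M)=\#\Phi_{A,v}(k_v)=c_{A,v},$$
and in particular $TT(A/K_v)$ is finite. I expect the one point really needing care to be the compatibility, used in the third paragraph, of the isomorphisms of Proposition~\ref{ttprop} with the inflation maps, since this is what licenses passing to the direct limit; everything else is the standard cohomology of $\widehat{\Z}$.
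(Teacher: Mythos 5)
Your proof is correct and follows essentially the same route as the paper: inflation--restriction to identify $TT(A/K_v)$ with $H^1(K_v^{\ur}/K_v,A(K_v^{\ur}))$, Proposition~\ref{ttprop} to pass via the direct limit to $H^1(k_v,\Phi_{A,v})$, and then the equality $\#H^0=\#H^1$ for a finite module over $\Gal(\kbar_v/k_v)\isom\widehat{\Z}$. The only cosmetic difference is that you unpack the paper's appeal to the triviality of the Herbrand quotient into the explicit kernel/cokernel computation for $\phi-1$, a welcome clarification given that $\widehat{\Z}$ is procyclic rather than finite cyclic.
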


\begin{proof}
The inflation-restriction sequence
$$0 \lra H^1(K_v^{\textrm{ur}}/K_v,A(K_v^{\textrm{ur}})) \lra H^1(K_v,A) \lra H^1(K_v^{\textrm{ur}},A)$$
identifies the set of Tamagawa torsors with the injective image of the group $H^1(K_v^{\textrm{ur}}/K_v,A(K_v^{\textrm{ur}}))$ in $H^1(K_v,A)$. 
There is an isomorphism
$$H^1(K_v^{\textrm{ur}}/K_v,A(K_v^{\textrm{ur}}))\isom\varinjlim{H^1(L_i/K_v,A(L_i))}$$
where the direct limit is over all finite, unramified Galois extensions $L_i$ of $K_v$. It now follows from Proposition \ref{ttprop} above
that 
$$H^1(K_v^{\ur}/K_v,A(K_v^{\ur}))\isom\varinjlim{H^1(l_i/k_v,\Phi_{A,v}(l_i))}\isom H^1(k_v,\Phi_{A,v})$$
But $\Phi_{A,v}$ is a finite Galois module over the finite field $k_v$ so that its Herbrand quotient is $1$. This implies that
$$\#H^1(\Spec{k_v},\Phi_{A,v})=\#H^0(\Spec{k_v},\Phi_{A,v})=c_{A,v}$$
\end{proof}

\begin{remark}
The proof above shows that the set $TT(A/K_v)$ is trivial when $A$ has \emph{good reduction} at $v$ (cf. \cite{silverman:AEC}).
It also follows from the theorem that the direct sum $\bigoplus_{v}TT(A/K_v)$ is the set of Tamagawa torsors of $A$ over all $v$ and has order $\prod_{v}c_{A,v}$. 
\end{remark}

\section{Local Duality of Tamagawa Torsors}

Let $A^{\vee}$ be the abelian variety dual to $A$ over $K_v$. For any abelian group $M$, we denote by $M^{*}:=\Hom(M,\Q/\Z)$ the \emph{Pontryagin dual} of $M$ or the character group of $M$. The main theorem in this section identifies the character group of $TT(A/K_v)$ as the arithmetic component group $\Phi_{A^{\vee},v}(k_v)$.

\begin{theorem}\label{ttdual}
There is a canonical pairing $$\Phi_{A^{\vee},v}(k_v)\,\times\,TT(A/K_v) \to \Q/\Z$$ which induces an isomorphism $${TT(A/K_v)}^{*}\isom\Phi_{A^{\vee},v}(k_v)$$
\end{theorem}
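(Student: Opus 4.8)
The plan is to reduce the assertion, by way of Theorem \ref{ttorder}, to a duality statement for the component group over the finite residue field $k_v$, and then to invoke Grothendieck's pairing on component groups together with the duality theorem for finite Galois modules over a finite field. The proof of Theorem \ref{ttorder} provides a canonical isomorphism
$$TT(A/K_v)\ \isom\ H^1(K_v^{\ur}/K_v,A(K_v^{\ur}))\ \isom\ H^1(k_v,\Phi_{A,v}),$$
and since $\Phi_{A^\vee,v}(k_v)=H^0(k_v,\Phi_{A^\vee,v})$, what must be produced is a perfect (hence, both groups being finite, non-degenerate) pairing
$$H^0(k_v,\Phi_{A^\vee,v})\ \times\ H^1(k_v,\Phi_{A,v})\ \lra\ \Q/\Z .$$

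To construct it I would first recall Grothendieck's pairing: from the Poincar{\'e} biextension of $(A,A^\vee)$ by $\G_m$ together with the exact sequences (\ref{SES}) for $A$ and for $A^\vee$ over $X_v=\Spec{\O_{K_v}}$ one obtains a canonical $\Gal(\kbar_v/k_v)$-equivariant pairing of finite {\'e}tale group schemes $\Phi_{A,v}\times\Phi_{A^\vee,v}\to\Q/\Z$, with trivial Galois action on $\Q/\Z$; over a \emph{finite} residue field this pairing is known to be perfect, so that it identifies $\Phi_{A^\vee,v}$ with the Pontryagin dual of $\Phi_{A,v}$ as a $\Gal(\kbar_v/k_v)$-module. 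Next, since $\Gal(\kbar_v/k_v)\isom\widehat{\Z}$ has cohomological dimension $1$ and $H^1(k_v,\Q/\Z)=\Hom(\widehat{\Z},\Q/\Z)\isom\Q/\Z$, cup product yields perfect pairings
$$H^i(k_v,\Phi_{A,v})\ \times\ H^{1-i}(k_v,\Phi_{A^\vee,v})\ \lra\ H^1(k_v,\Q/\Z)\ \isom\ \Q/\Z\qquad(i=0,1);$$
this is the local duality theorem for finite modules over a finite field (\cite[Ch.\ I]{milne:duality}, cf.\ \cite{serre:localfields}), and the underlying Herbrand-quotient computation is exactly the one already used in the proof of Theorem \ref{ttorder}. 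Taking $i=1$ and composing with Grothendieck's pairing gives the sought perfect pairing $H^0(k_v,\Phi_{A^\vee,v})\times H^1(k_v,\Phi_{A,v})\to\Q/\Z$; transporting it along the isomorphism of Theorem \ref{ttorder} yields both the canonical pairing $\Phi_{A^\vee,v}(k_v)\times TT(A/K_v)\to\Q/\Z$ of the statement and the induced isomorphism $TT(A/K_v)^{*}\isom\Phi_{A^\vee,v}(k_v)$.

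The substantive input, and the step I expect to be the main obstacle, is the perfectness of Grothendieck's pairing over the finite field $k_v$; while perfectness of this pairing is a delicate matter over a general base, it is a theorem when the residue field is finite (McCallum for the $p$-primary part, the prime-to-$p$ part being classical; see \cite{neronmodels}, \cite[Ch.\ I]{milne:duality} and the references therein), and I would simply quote it. An alternative route bypasses component groups altogether: starting from Tate's local duality isomorphism $A^\vee(K_v)\times H^1(K_v,A)\to\Q/\Z$ one shows that the exact annihilator of the unramified subgroup $TT(A/K_v)\subseteq H^1(K_v,A)$ is $\cAdual^0(\O_{K_v})$, the kernel of the reduction map $A^\vee(K_v)=\cAdual(\O_{K_v})\to\Phi_{A^\vee,v}(k_v)$ (which is surjective because $\cAdual^0$ is smooth with connected special fibre over the finite field $k_v$, by Lang's theorem and Hensel's lemma as in Proposition \ref{ttprop}), so that Tate's pairing descends to a perfect pairing $\bigl(A^\vee(K_v)/\cAdual^0(\O_{K_v})\bigr)\times TT(A/K_v)\to\Q/\Z$; the identification $A^\vee(K_v)/\cAdual^0(\O_{K_v})\isom\Phi_{A^\vee,v}(k_v)$ then recovers the pairing of the statement, although the non-degeneracy of the descended pairing again rests on the same finite-residue-field input. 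Finally, the theorem in particular re-proves the equality of Tamagawa numbers $c_{A,v}=c_{A^\vee,v}$, since $\#TT(A/K_v)=c_{A,v}$ by Theorem \ref{ttorder} while $\#\Phi_{A^\vee,v}(k_v)=c_{A^\vee,v}$.
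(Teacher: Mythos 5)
Your primary route — transporting Grothendieck's pairing $\Phi_{A^{\vee},v}\times\Phi_{A,v}\to\Q/\Z$ to a pairing $H^0(k_v,\Phi_{A^{\vee},v})\times H^1(k_v,\Phi_{A,v})\to\Q/\Z$ via the duality theorem for finite Galois modules over $k_v$, and then composing with the identification $TT(A/K_v)\isom H^1(k_v,\Phi_{A,v})$ from Theorem \ref{ttorder} — is correct, but it is not the paper's main proof; the paper relegates exactly this argument to the remark immediately following the proof, citing \cite[III~C.13]{milne:duality}. The paper's own proof instead runs along the lines of your \emph{secondary} route: it starts from Tate's local duality $A^{\vee}(K_v)\times H^1(K_v,A)\to\Q/\Z$, shows in Lemma \ref{lem1} that the exact annihilator of $H^1(L_i/K_v,A(L_i))$ is the norm subgroup $\Norm(A^{\vee}(L_i))$, passes to the limit over finite unramified $L_i$ to get that $TT(A/K_v)^{*}\isom A^{\vee}(K_v)/\Norm(A^{\vee}(K_v^{\ur}))$, and then proves in a second lemma (via surjectivity of the norm on identity components, which rests on Lang's theorem and Hensel's lemma) that this quotient by universal norms is exactly $\Phi_{A^{\vee},v}(k_v)$.

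The trade-off between the two is worth noting. Your Grothendieck-pairing route is conceptually clean but imports a deep input: the perfectness of Grothendieck's pairing over a finite residue field, which in residue characteristic $p$ is a substantial theorem (B{\'e}gueri, McCallum; cf.\ \cite{mccallum:wild}) and which you are right to flag as the real obstacle. The paper's route avoids that import entirely, deriving perfectness directly from Tate's local duality and an explicit computation of the universal-norm subgroup; the price is the somewhat more hands-on N{\'e}ron-model argument needed to show the norm map is surjective on identity components. Both are valid, and your proposal correctly identifies and executes each; the only misalignment is one of emphasis, in that what you present as the primary argument is what the paper presents as the remark, and vice versa. One small caveat on your secondary route: you assert that the exact annihilator of $TT(A/K_v)$ is $\cAdual^0(\O_{K_v})$ and then identify this with the universal-norm subgroup; the paper establishes that identification as a nontrivial lemma rather than taking it for granted, so that step deserves the same amount of care there.
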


Theorem \ref{ttdual} will follow as a consequence of two lemmas that we now establish. To begin with, there is a canonical, non-degenerate pairing due to Tate(\cite{milne:duality} Cor I.3.4, \cite{tate:wc}) 
$$A^{\vee}(K_v)\,\times\,H^1(K_v,A) \to \Q/\Z$$ 
which induces an isomorphism $A^{\vee}(K_v)\isom H^1(K_v,A)^{*}$ of locally compact groups. 

\begin{lemma}\label{lem1}
Let $L_i$ be a finite, unramified Galois extension of $K_v$. Then there is a perfect pairing
$$\Adual(K_v)/{\Norm(\Adual(L_i))}\,\times\,H^1(L_i/K_v,A(L_i)) \to \Q/\Z$$
where $\Norm(\Adual(L_i))\subset\Adual(K_v)$ is the image of $\Adual(L_i)$ in $\Adual(K_v)$ under the norm mapping
corresponding to $\Gal(L_i/K_v)$.
\end{lemma}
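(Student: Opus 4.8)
The plan is to derive the pairing from Tate local duality for the finite extension $L_i/K_v$ combined with a standard duality between Tate cohomology groups of a cyclic (or at least finite) Galois group. First I would recall that over the local field $L_i$ there is Tate's non-degenerate pairing $A^{\vee}(L_i) \times H^1(L_i, A) \to \Q/\Z$, and more relevantly the cup-product pairing on the Galois cohomology of $G_i = \Gal(L_i/K_v)$ with values in the ``duality'' module. Since $L_i/K_v$ is unramified, $G_i$ is cyclic, so Tate cohomology is $2$-periodic and there is a cup-product pairing $\hat H^0(G_i, \Adual(L_i)) \times \hat H^{0}(G_i, ?) \to \hat H^{0}(G_i,\Q/\Z)$ coming from the local invariant map. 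Concretely, the Tate–Nakayama style statement I want is the perfect duality
$$
\hat H^{r}(G_i, \Adual(L_i)) \times \hat H^{1-r}(G_i, A(L_i)) \lra \Q/\Z,
$$
and specializing to $r=0$ gives exactly
$$
\Adual(K_v)/\Norm(\Adual(L_i)) \;=\; \hat H^{0}(G_i,\Adual(L_i)) \quad\text{paired with}\quad \hat H^{1}(G_i, A(L_i)) = H^1(L_i/K_v, A(L_i)).
$$
So the first substantive step is to assemble this $G_i$-equivariant duality, which is essentially the local Tate duality of \cite{milne:duality} (Cor.\ I.3.4) rephrased via the Hochschild–Serre spectral sequence relating $H^*(K_v,-)$, $H^*(L_i,-)$ and $H^*(G_i,-)$, using that $A$ and $\Adual$ are dual abelian varieties.

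The key steps, in order, are: (1) Set $G_i = \Gal(L_i/K_v)$, note it is cyclic of order equal to the residue degree. (2) Invoke Tate local duality over $L_i$: $A^{\vee}(L_i) \cong H^1(L_i,A)^{*}$, compatibly with the $G_i$-action, where $G_i$ acts on the left and on $H^1(L_i,A)$ naturally, and on $\Q/\Z$ trivially. (3) Use the general fact (Tate–Nakayama / the ``duality for finite groups'' argument, e.g.\ \cite{serre:localfields} or \cite{milne:duality} Ch.\ I) that if $M$ is a $G_i$-module and $N = \Hom(M,\Q/\Z)$ its dual with the contragredient action, then cup product gives a perfect pairing $\hat H^{r}(G_i,N) \times \hat H^{-r}(G_i,M) \to \Q/\Z$. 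Apply this with $M = H^1(L_i,A)$, $N = A^{\vee}(L_i)$, and $r$ chosen so that the inflation-restriction sequence identifies $\hat H^{-1}$ or $\hat H^{1}$ of the relevant module with $H^1(L_i/K_v, A(L_i))$. (4) Read off the $r=0$ case: $\hat H^{0}(G_i, \Adual(L_i)) = \Adual(K_v)/\Norm \Adual(L_i)$ on one side, and $\hat H^{1}(G_i, A(L_i)) = H^1(G_i, A(L_i))$ (since for $\hat H^{1}$ Tate and ordinary cohomology agree) on the other. (5) Conclude non-degeneracy/perfectness from the finiteness of the groups involved, which follows from Theorem \ref{ttorder} once we pass to the limit (though for a fixed $L_i$ finiteness of $H^1(G_i,A(L_i)) \cong H^1(l_i/k_v,\Phi_{A,v}(l_i))$ is Proposition \ref{ttprop} plus finiteness of $\Phi_{A,v}$).

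The main obstacle I expect is step (3): one has to be careful that the $G_i$-module structure on $H^1(L_i,A)$ appearing in Tate's duality is precisely the contragredient of the action on $A^{\vee}(L_i)$, so that the abstract finite-group duality pairing applies and is genuinely perfect — this requires checking compatibility of Tate's pairing with the Galois action, which is in \cite{milne:duality} but must be cited carefully. A secondary subtlety is matching the Tate cohomology indices so that $H^1(L_i/K_v, A(L_i))$ (which is ordinary, not modified, $H^1$) pairs with the norm cokernel $\hat H^0$; since $G_i$ is cyclic this is just the periodicity isomorphism $\hat H^{1} \cong \hat H^{-1}$ and the identification $\hat H^{-1}(G_i, \Adual(L_i)) \leftrightarrow \hat H^{0}(G_i, \Adual(L_i))$ up to the choice of generator of $G_i$, so it is routine but should be stated. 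The remaining verifications (that the norm map is the right one, that passing to cohomology of $G_i$ is compatible with inflation into $H^1(K_v,A)$) are formal.
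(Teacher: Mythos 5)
Your plan rests on applying the abstract duality for Tate cohomology of a finite group to the module $M = H^1(L_i,A)$ and its Pontryagin dual $N = \Adual(L_i)$, and then reading off the $r=0$ case as a pairing between $\hat H^0(G_i,\Adual(L_i))$ and $\hat H^1(G_i,A(L_i))$. This last step does not follow. The abstract finite-group duality you invoke (with the correct index shift, $\hat H^r(G_i,N)\times\hat H^{-r-1}(G_i,M)\to\Q/\Z$, not $\hat H^{-r}$ as written) pairs $\hat H^0(G_i,\Adual(L_i))$ with $\hat H^{-1}\bigl(G_i,H^1(L_i,A)\bigr)$, a Tate cohomology group of the module $H^1(L_i,A)$. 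To arrive at the group you actually want, $H^1(G_i,A(L_i))$, you must pass from cohomology of $H^1(L_i,A)$ to cohomology of $A(L_i)$ with a shift; this is precisely the content of the Hochschild--Serre spectral sequence together with the vanishing $H^2(L_i,A)=0$, and it is not merely a matter of ``matching the Tate cohomology indices.'' You flag this as a ``secondary subtlety,'' but it is in fact the central point where the argument is incomplete; as written, the modules on the two sides of your pairing in step (4) do not match the modules to which the abstract duality was applied in step (3).

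The paper's proof avoids the finite-group-duality detour entirely and is both shorter and cleaner. It uses the inflation--restriction sequence to identify $H^1(L_i/K_v,A(L_i))$ with $\ker\bigl(H^1(K_v,A)\to H^1(L_i,A)\bigr)$, then invokes Tate duality simultaneously over $K_v$ and over $L_i$ (as vertical isomorphisms in a commutative square) together with the compatibility statement --- that restriction on $H^1$ is Pontryagin dual to the norm map $\Adual(L_i)\to\Adual(K_v)$, which is the key input from \cite{tate:wc} --- to compute that kernel directly as $\bigl(\coker(\Norm)\bigr)^* = \bigl(\Adual(K_v)/\Norm(\Adual(L_i))\bigr)^*$. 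The norm--restriction compatibility plays the same role in the paper that the ``$G_i$-equivariance of Tate duality'' plays in your outline, but it is applied at the level of the local duality isomorphisms themselves rather than pushed through a cup-product pairing on Tate cohomology; this sidesteps the index bookkeeping and the module mismatch that your sketch runs into. If you want to salvage your route, you would need to establish the ``Tate--Nakayama style'' pairing $\hat H^r(G_i,\Adual(L_i))\times\hat H^{1-r}(G_i,A(L_i))\to\Q/\Z$ as a separate lemma, which essentially reproduces the paper's argument anyway.
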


\begin{proof}
The restriction map 
$$H^1(K_v,A) \to H^1(L_i,A)$$
fits into a commutative diagram
\[
\xymatrix{
H^1(K_v,A)\ar[r]\ar[d] &H^1(L_i,A)\ar[d]\\
{\Adual(K_v)}^{*}\ar[r] &{\Adual(L_i)}^{*}}
\]
where the vertical arrows are isomorphisms by Tate's local
duality. The bottom horizontal map is dual (\cite{tate:wc}) 
to the norm map 
$$\Norm_{L_i/K_v}\;:\;\Adual(L_i)\to\Adual(K_v)$$
so that there are isomorphisms (\cite{tate:wc} 8, Cor 1)
\begin{align*}
H^1(L_i/K_v,A(L_i)) &\isom\ker\left(H^1(K_v,A)\to H^1(L_i,A)\right)\\
&\isom\ker\left({\Adual(K_v)}^{*}\to{\Adual(L_i)}^{*}\right)\\
&\isom\left(\coker\left(\Adual(L_i)\overset{\Norm}{\longrightarrow}\Adual(K_v)\right)\right)^{*}\\
&\isom(\Adual(K_v)/{\Norm(\Adual(L_i))})^{*}
\end{align*}
In particular, $\Norm(\Adual(L_i))\subset\Adual(K_v)$ is the exact annihilator of the subgroup 
$H^1(L_i/K_v,A(L_i))\subset H^1(K_v,A)$ under the Tate pairing described above.
\end{proof}

Since $H^1(L_i/K_v,A(L_i))\isom H^1(l_i/k_v,\Phi_{A,v})$ by Proposition \ref{ttprop} and since
the latter group is finite, we conclude that $\Adual(K_v)/{\Norm(\Adual(L_i))}$ is finite as well. Subgroups
of $\Adual(K_v)$ of the form $\Norm(\Adual(L_i))$ for any finite extension $L_i$ will be called the \emph{norm subgroups}
of $\Adual(K_v)$. We define the group of \emph{universal norms} on $\Adual(K_v)$ from $K_v^{\ur}$ to be
$$\Norm(\Adual(K_v^{\ur}))=\bigcap_{L_i}\Norm(\Adual(L_i))$$
where the intersection is over all finite, unramified extensions $L_i$ of $K_v$. It follows from Lemma \ref{lem1} that 
$TT(A/K_v)=\varinjlim_{L_i}H^1(L_i/K_v,A(L_i))$ is the Pontryagin dual of $\Adual(K_v)/\Norm(\Adual(K_v^{\ur}))$. To prove Theorem \ref{ttdual},
it therefore suffices to show that

\begin{lemma}
For an abelian variety $A/K_v$, the arithmetic component group $\Phi_{A,v}(k_v)$ is isomorphic to $A(K_v)$ modulo the subgroup of universal norms from
$K_v^{\ur}$, i.e.
$$\Phi_{A,v}(k_v)\isom A(K_v)/\Norm(A(K_v^{\ur}))$$
\end{lemma}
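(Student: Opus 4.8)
The plan is to compare, for each finite unramified Galois extension $L_i/K_v$, the $G_i$-module $A(L_i)$ with $\Phi_{A,v}(l_i)$ by means of Tate cohomology of the (necessarily cyclic) group $G_i=\Gal(L_i/K_v)\isom\Gal(l_i/k_v)$, and then to pass to the limit over the $L_i$.

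First I would recall from the proof of Proposition \ref{ttprop} the exact sequence of $G_i$-modules
$$0 \to \cA^0_{X_v}(\O_{L_i}) \to A(L_i) \to \Phi_{A,v}(l_i) \to 0,$$
together with the vanishing $H^1(G_i,\cA^0_{X_v}(\O_{L_i}))=H^2(G_i,\cA^0_{X_v}(\O_{L_i}))=0$. Since $G_i$ is cyclic, Tate cohomology is $2$-periodic, so $\hat H^n(G_i,\cA^0_{X_v}(\O_{L_i}))=0$ for every $n\in\Z$; in particular the norm map $\cA^0_{X_v}(\O_{L_i})\to\cA^0_{X_v}(\O_{K_v})$ is surjective. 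Feeding the displayed sequence into the Tate cohomology long exact sequence then produces an isomorphism, induced by the reduction map,
$$\hat H^0(G_i,A(L_i)) \,\isom\, \hat H^0(G_i,\Phi_{A,v}(l_i)),$$
that is, $A(K_v)/\Norm_{L_i/K_v}(A(L_i)) \isom \Phi_{A,v}(k_v)/N_{G_i}\Phi_{A,v}(l_i)$, where I use the descent identifications $A(L_i)^{G_i}=A(K_v)$ and $\Phi_{A,v}(l_i)^{G_i}=\Phi_{A,v}(k_v)$. Because the reduction map $A(K_v)=\cA_{X_v}(\O_{K_v})\to\Phi_{A,v}(k_v)$ is surjective (smoothness of $\cA_{X_v}$ and Hensel's lemma, together with Lang's theorem applied to $\cA^0_v$ over the finite field $k_v$, exactly as in Proposition \ref{ttprop}), this identifies $\Norm_{L_i/K_v}(A(L_i))$ with the preimage under reduction of the subgroup $N_{G_i}\Phi_{A,v}(l_i)\subseteq\Phi_{A,v}(k_v)$.

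Next I would intersect over all finite unramified $L_i$. For unramified extensions the subgroups $\Norm_{L_i/K_v}(A(L_i))$ are downward directed (transitivity of norms along a tower $K_v\subseteq L_i\subseteq L_iL_j$), so the group of universal norms $\Norm(A(K_v^{\ur}))=\bigcap_i\Norm_{L_i/K_v}(A(L_i))$ may be computed along any cofinal family. By the previous step it equals the preimage under reduction of $\bigcap_i N_{G_i}\Phi_{A,v}(l_i)$. It then remains to note that this last intersection vanishes: as soon as $[l_i:k_v]$ is divisible by both the order of Frobenius on the finite group $\Phi_{A,v}(\kbar_v)$ and the exponent of that group, one has $\Phi_{A,v}(l_i)=\Phi_{A,v}(\kbar_v)$ and $N_{G_i}$ kills it, and such $L_i$ are cofinal among unramified extensions. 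Hence $\Norm(A(K_v^{\ur}))=\ker\!\big(A(K_v)\to\Phi_{A,v}(k_v)\big)$, and surjectivity of the reduction map yields $A(K_v)/\Norm(A(K_v^{\ur}))\isom\Phi_{A,v}(k_v)$, as claimed.

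The substantive input — the vanishing of all Tate cohomology groups of $\cA^0_{X_v}(\O_{L_i})$ — is supplied by Proposition \ref{ttprop} once one invokes the $2$-periodicity available since unramified local extensions are cyclic, so the argument is essentially bookkeeping after that. The one place I expect to need genuine care is verifying that the isomorphism $\hat H^0(G_i,A(L_i))\isom\hat H^0(G_i,\Phi_{A,v}(l_i))$ is compatible with enlarging $L_i$, so that the passage to the limit is literally an intersection of norm subgroups of $A(K_v)$ rather than merely of abstractly isomorphic quotients; this compatibility is just functoriality of the reduction map and of the Tate cohomology long exact sequence with respect to the inclusions among the $L_i$.
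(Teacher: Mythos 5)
Your argument is correct and reaches the desired conclusion, and it takes a somewhat cleaner route than the paper. Both proofs start from the exact sequence $0 \to \cA^0_{X_v}(\O_{L_i}) \to A(L_i) \to \Phi_{A,v}(l_i) \to 0$ and from the vanishing of $H^1$ and $H^2$ of the left-hand term established in Proposition \ref{ttprop}, and both ultimately need the surjectivity of the induced norm $\Norm^0$ on identity components. The paper re-derives that surjectivity for a single well-chosen $L_i$ by an explicit Lang--Hensel torsor argument and then reads off the lemma from a diagram chase, while you observe that cyclicity of $G_i$ together with the $2$-periodicity of Tate cohomology promotes the known $H^1$, $H^2$ vanishing to the vanishing of every $\widehat{H}^n(G_i,\cA^0_{X_v}(\O_{L_i}))$. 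This simultaneously gives $\widehat{H}^0=0$ (surjectivity of $\Norm^0$) and, via the long exact sequence, the uniform isomorphism $\widehat{H}^0(G_i,A(L_i))\isom\widehat{H}^0(G_i,\Phi_{A,v}(l_i))$ for every unramified $L_i$. The identification of $\Norm_{L_i/K_v}(A(L_i))$ with the reduction-preimage of $N_{G_i}\Phi_{A,v}(l_i)$ then follows (and you correctly check it via surjectivity of the reduction map and functoriality), and the downward-directedness of the norm subgroups makes the limit an honest intersection. This is a more systematic organization of what the paper does locally for one clever choice of $L_i$, and it buys conceptual transparency at the cost of invoking Tate cohomology explicitly.

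There is one small slip in the last step. Write $d$ for the order of Frobenius $\phi$ on $\Phi:=\Phi_{A,v}(\kbar_v)$, $e$ for the exponent of $\Phi$, and $n=[l_i:k_v]$ with $d\mid n$. Then
$$N_{G_i}(x)=\sum_{j=0}^{n-1}\phi^j(x)=\tfrac{n}{d}\sum_{j=0}^{d-1}\phi^j(x),$$
so what must be annihilated is the image of the $d$-fold partial norm inside $\Phi(k_v)$, and the relevant multiplier is $n/d$, not $n$. Requiring only that $n$ be divisible by \emph{both} $d$ and $e$ does not force $n/d$ to be divisible by $e$: for example $\Phi=\Z/4\oplus\Z/4$ with $\phi$ the swap has $d=2$, $e=4$, and $n=4$ gives $n/d=2$, which does not kill the diagonal image of the partial norm. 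The correct sufficient condition is that $n$ be divisible by $d\cdot e$ (or, more economically, by $d$ times the exponent of $\Phi(k_v)$); such $n$ are still cofinal among unramified degrees, so the conclusion $\bigcap_i N_{G_i}\Phi_{A,v}(l_i)=0$ and hence the lemma are unaffected. This is easily patched and does not represent a genuine gap in the strategy.
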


\begin{proof} 
The norm map $A(L_i)\to A(K_v)$ may be given as a map $$\cA_{X_v}(\O_{L_i})\to\cA_{X_v}(\O_{K_v})$$ where $\cA_{X_v}$ is as defined in the proof of Proposition \ref{ttprop}. Choosing the finite, unramified extension $L_i$ such that $[L_i:K_v]=[l_i:k_v]$ is coprime to $c_{\ssstyle{A},v}=\#{\Phi_{A,v}(k_v)}$, we find that there is an induced norm map 
$${\Norm}^0:\cA_{X_v}^0(\O_{L_i})\to\cA_{X_v}^0(\O_{K_v})$$
whose kernel $G=\ker(\Norm^0)$ is a smooth group scheme with connected fibers. Let $x:\Spec{\O_{K_v}}\to\cA_{X_v}^0$ be any $\O_{K_v}$-point of $\cA_{X_v}^0$. The $\Norm^0$-pullback of $x$ is a smooth $\O_{K_v}$-scheme $Y$ that is also a $G$-torsor for the {\'e}tale topology. The special fiber $Y_v=Y\times_{\Spec{\O_{K_v}}}\Spec{k_v}$ is a torsor for a smooth, connected group scheme over the finite field $k_v$. By Lang's Theorem, $Y_v$ has a $k_v$-rational point. Since $Y$ is smooth over $\O_{K_v}$, the reduction map $$Y(\O_{K_v})\to Y_{v}(k_v)$$ is surjective and thus, $Y$ has an $\O_{K_v}$-point. We conclude that $\Norm^0$ is surjective. Now consider the commutative diagram
\[
\xymatrix{
0\ar[r] &\cA_{X_v}^0(\O_{L_i})\ar[r]\ar[d]^{\Norm^0} &\cA_{X_v}(\O_{L_i})\isom A(L_i)\ar[r]\ar[d] &\Phi_{A,v}(l_i)\ar[r]\ar[d] &0\\
0\ar[r] &\cA_{X_v}^0(\O_{K_v})\ar[r] &\cA_{X_v}(\O_{K_v})\isom A(K_v)\ar[r] &\Phi_{A,v}(k_v)\ar[r] &0}
\]
The left vertical map is surjective while the right vertical map is the $0$ map. It follows that $A(K_v)/\bigcap_{L_i}\Norm(A(L_i)\isom\Phi_{A,v}(k_v)$.
\end{proof}


\begin{remark}
The pairing in Theorem \ref{ttdual}also follows directly from the canonical pairing $\Phi_{A^{\vee},v}\times\Phi_{A,v}\to\Q/\Z$ defined by Grothendieck (cf. \cite{milne:duality} III C.13).
\end{remark}

\begin{theorem}
There are exact sequences
\[
\xymatrix{
0 \ar[r] &{\cA^{\vee}}^0(X)\ar[r] &A^{\vee}(K)\ar[r] &\displaystyle\bigoplus_{v}\Phi_{A^{\vee},v}(k_v)\\
0 \ar[r] &\Sha(A/K)\ar[r] &H^1(X,\cA)\ar[r] &\displaystyle\bigoplus_{v}TT(A/K_v)}
\]
where $\cA^{\vee}$ is the N{\'e}ron model of $A^{\vee}$. Assuming that $\Sha(A/K)$ is finite, the images of the two-right hand maps are orthogonal complements of each other under the pairing described in Theorem \ref{ttdual}.
\end{theorem}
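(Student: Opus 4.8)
\emph{The plan} is to read the two exact sequences straight off Sections~1--3, and then to get the orthogonality from the fact that the pairing of Theorem~\ref{ttdual} is Tate's local pairing descended to component groups, together with global (Poitou--Tate) duality. The first exact sequence is the initial segment of (\ref{mainseq}) applied to $A^\vee$ in place of $A$, using $A^\vee(K)\cong\cA^\vee(X)$ and the decomposition (\ref{component}) of $\bigoplus_v\Phi_{A^\vee,v}(k_v)$. The second is (\ref{kernelsha}) for $A$ with the isomorphism $H^1(\Spec k_v,\Phi_{A,v})\cong TT(A/K_v)$ of the proof of Theorem~\ref{ttorder} substituted in; the only thing to check is that under this isomorphism the map $H^1(X,\cA)\to\bigoplus_v H^1(\Spec k_v,\Phi_{A,v})$ induced by $\cA\to\Phi_A$ becomes the localization $c\mapsto(\res_v c)_v$ with $\res_v c\in TT(A/K_v)$. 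That $\res_v c$ is unramified holds because a torsor under the smooth model $\cA_{X_v}$ acquires a point over $\O_{K_v^{\ur}}$ by Lang's theorem and Hensel's lemma, exactly as in Proposition~\ref{ttprop}, and the matching of the two descriptions of the map is the routine compatibility of the reduction-of-torsors map with passage to component groups; the kernel is then $\Sha(A/K)$ by (\ref{kernelsha}). It is also convenient to record, via the Leray spectral sequence for $\Spec K\hookrightarrow X$, that $H^1(X,\cA)$ is exactly the subgroup of $H^1(K,A)$ of classes unramified at every prime.

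For the orthogonality I would first settle the inclusion $\im(A^\vee(K)\to\bigoplus_v\Phi_{A^\vee,v}(k_v))\subseteq\im(H^1(X,\cA)\to\bigoplus_v TT(A/K_v))^{\perp}$. By construction --- this is the content of Lemma~\ref{lem1} and the lemma identifying $\Phi_{A^\vee,v}(k_v)$ with $A^\vee(K_v)/\Norm(A^\vee(K_v^{\ur}))$ --- the pairing of Theorem~\ref{ttdual} is the descent of Tate's local pairing $A^\vee(K_v)\times H^1(K_v,A)\to\Q/\Z$. Hence for $P\in A^\vee(K)$ and $c\in H^1(X,\cA)\subseteq H^1(K,A)$ the value of the global pairing on their images is $\sum_v\langle\res_v P,\res_v c\rangle_v$, the sum of the local Tate pairings. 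Since each local term is $\mathrm{inv}_v$ of the cup product of $\res_v P$ with $\res_v c$ into $\Br(K_v)$, formed via the canonical biextension of $(A,A^\vee)$ by $\G_m$, this sum equals $\sum_v\mathrm{inv}_v$ applied to the global class $P\cup c\in\Br(K)$, which vanishes by the reciprocity law $\sum_v\mathrm{inv}_v=0$. This gives the inclusion, so the two images are orthogonal, with no hypothesis on $\Sha$.

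The reverse inclusion is where finiteness of $\Sha(A/K)$ is used. I would identify $\bigoplus_v TT(A/K_v)$ with the exact annihilator $N^{\perp}$ of $N=\prod_v\Norm(A^\vee(K_v^{\ur}))$ inside $\bigoplus_v H^1(K_v,A)$ (Lemma~\ref{lem1} passed to the limit), and $\bigoplus_v\Phi_{A^\vee,v}(k_v)$ with $(\prod_v A^\vee(K_v))/N$, so that the global pairing is the one induced by $\sum_v\langle\,,\rangle_v$. With $\Lambda=\im(H^1(K,A)\to\bigoplus_v H^1(K_v,A))$ and $D=\im(A^\vee(K)\to\prod_v A^\vee(K_v))$, the images of the two right-hand maps of the theorem are $\Lambda\cap N^{\perp}$ and $(D+N)/N$, and the orthogonal complement of the latter inside $N^{\perp}$ is $\overline{D}^{\,\perp}\cap N^{\perp}$. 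Tate's global duality for abelian varieties --- the exactness of the relevant stretch of the Poitou--Tate sequence, which is where $\Sha(A/K)$ finite enters --- gives $\overline{D}^{\,\perp}=\Lambda$, so this complement is $\Lambda\cap N^{\perp}$; since $A$ has good reduction at all but finitely many primes every group here is finite, so this single identity shows both that the images annihilate each other and that each is the full annihilator of the other. Alternatively one can count orders: the easy inclusion reduces the claim to $\#\im(A^\vee(K)\to\bigoplus_v\Phi_{A^\vee,v}(k_v))\cdot\#\im(H^1(X,\cA)\to\bigoplus_v TT(A/K_v))=\prod_v c_{A,v}$, and using $c_{A,v}=\#TT(A/K_v)=c_{A^\vee,v}$ (Theorems~\ref{ttorder} and \ref{ttdual}) together with $\#\Sha(A/K)=\#\Sha(A^\vee/K)$ (the Cassels--Tate pairing), this comes down to $\#H^1(X,\cA)=\#H^1(X,(\cA^\vee)^0)$, which is the global duality of N{\'e}ron models in Milne's \emph{Arithmetic Duality Theorems}.

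The substantive step, which I expect to be the main obstacle, is this last input: the identity $\overline{D}^{\,\perp}=\Lambda$ (equivalently $\#H^1(X,\cA)=\#H^1(X,(\cA^\vee)^0)$, or the exactness of Poitou--Tate), which is precisely the point where finiteness of $\Sha(A/K)$ cannot be dispensed with. The rest is bookkeeping, but that bookkeeping still demands care: one must verify that the pairing of Theorem~\ref{ttdual} genuinely coincides with the sum of local Tate pairings, and that $H^1(X,\cA)$ is the everywhere-unramified subgroup of $H^1(K,A)$, so that the images of the two right-hand maps really are $(D+N)/N$ and $\Lambda\cap N^{\perp}$.
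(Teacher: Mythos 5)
Your first two exact sequences match the paper's own proof exactly: (\ref{mainseq}) applied to $A^\vee$, and (\ref{kernelsha}) combined with the isomorphism $H^1(\Spec k_v,\Phi_{A,v})\cong TT(A/K_v)$ of Theorem~\ref{ttorder}. For the orthogonality statement the paper is much terser than you: it simply cites the main result of \cite{gonzalez} as a black box, whereas you supply a self-contained argument. Your argument is sound and is essentially what the citation conceals: the easy inclusion follows, as you say, from the fact that the pairing of Theorem~\ref{ttdual} descends from the local Tate pairings, so that for $P\in A^\vee(K)$ and $c\in H^1(X,\cA)$ one has $\sum_v\langle\res_v P,\res_v c\rangle_v=\sum_v\mathrm{inv}_v(P\cup c)=0$ by reciprocity, with no hypothesis on $\Sha(A/K)$; the reverse inclusion, via your identifications $\bigoplus_v TT(A/K_v)=N^{\perp}$ and $\bigoplus_v\Phi_{A^\vee,v}(k_v)=(\prod_v A^\vee(K_v))/N$, reduces to the Cassels--Tate/Poitou--Tate exactness (equivalently, Milne's duality $\#H^1(X,\cA)=\#H^1(X,(\cA^\vee)^0)$ together with $\#\Sha(A/K)=\#\Sha(A^\vee/K)$), which is exactly where finiteness of $\Sha$ enters. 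Your counting variant is a clean way to see that both inclusions are equalities at once. Two small cautions worth recording: the topological closure $\overline{D}$ in the global duality statement, which you do flag; and, as elsewhere in this paper, the identification of $\Sha(A/K)$ with the everywhere-unramified classes in $H^1(K,A)$ is only valid away from the $2$-primary part, so the orthogonality as stated should implicitly carry the same restriction.
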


\begin{proof}
The first exact sequence is \ref{mainseq} applied to $A^{\vee}$ while the second exact sequence follows from \ref{kernelsha} and Theorem \ref{ttorder}. The second part of the theorem follows from the main result in \cite{gonzalez}.
\end{proof}

\begin{remark}
The second exact sequence in the theorem above also shows that the index $[H^1(X,\cA):\Sha(A/K)]$ divides $\prod_{v}c_{A,v}$, the product of the Tamagawa numbers of $A$.
\end{remark}

\begin{remark}
According to the discussion in \cite{gonzalez}, the cokernel of the map $\Adual(K)\to\bigoplus_{v}\Phi_{\Adual,v}(k_v)$ may be defined as
the \emph{N{\'e}ron class group} of $\Adual/K$ and denoted by $C_{\Adual,K}$. If the image of the map $H^1(X,\cA)\to\bigoplus_{v}TT(A/K_v)$ is
denoted by $C_{A,K}^{1}$, then the above theorem states that, assuming $\Sha(A/K)$ is finite, there is a perfect pairing
$$C_{\Adual,K} \times C_{A,K}^{1} \to \Q/\Z$$
It also follows that the short exact sequence 
$$0 \to C_{\Adual,K} \to H^1(X,{\cA^{\vee}}^0) \to \Sha(\Adual/K) \to 0$$
has, as its dual, the short exact sequence
$$0 \to \Sha(A/K) \to H^1(X,\cA) \to C_{A,K}^1 \to 0$$
\end{remark}

\section{Brauer Groups and Tamagawa Torsors}

The Brauer group of $K_v$, denoted by $\Br(K_v)$, is given by the cohomology group $H^2(K_v,\G_m)$. To relate the Tamagawa torsors of $A$ at $v$ to $\Br(K_v)$, we begin by recalling 
(\cite[\S I 3.1]{milne:duality}) the canonical isomorphism 
$$\Pic^0_{K_v}(A)\isom\Ext^1_{K_v}(A,\G_m)$$ 
where $\Pic^0_{K_v}(A)=A^{\vee}(K_v)$. There is a canonical pairing (\cite[\S 0.16]{milne:duality})\footnote{This pairing can be used to establish
Tate local duality mentioned in the proof of Theorem \ref{ttdual}.}
$$\Ext^1_{K_v}(A,\G_m)\times H^1(K_v,A) \to H^2(K_v,\G_m)\isom\Br(K_v)$$
Thus every element in $\Ext^1_{K_v}(A,\G_m)$ defines a map
$$H^1(K_v,A) \to \Br(K_v)$$
We wish to study the restriction of this map to $TT(A/K_v)$. To this end, let $B_{\alpha}\in\Ext^1_{K_v}(A,\G_m)$ be the semiabelian variety corresponding to $0\neq[\alpha]\in\Pic^0_{K_v}(A)$. We define the Tamagawa torsors of $B_{\alpha}$ at $K_v$, denoted by $TT(B_{\alpha}/K_v)$, in
a manner similar to that employed for $A$. In other words, we define $TT(B_{\alpha}/K_v)$ by the exactness of the sequence
$$0 \to TT(B_{\alpha}/K_v) \to H^1(K_v,B_{\alpha}) \to H^1(K_v^{\ur},B_{\alpha})$$

\begin{proposition}\label{ttbr1}
With notations as above, there is an exact sequence 
$$0 \to TT(B_{\alpha}/K_v) \to TT(A/K_v) \overset{\Delta_{\alpha,v}}{\lra} \Br(K_v)$$
\end{proposition}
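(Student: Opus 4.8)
The plan is to identify $\Delta_{\alpha,v}$ with the connecting homomorphism of the long exact cohomology sequence attached to the defining extension of $B_\alpha$, and then to compare that sequence with its analogue over $K_v^{\ur}$ by a short diagram chase.

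By construction, $B_\alpha$ is the semiabelian variety sitting in a short exact sequence of commutative group schemes over $K_v$,
\[
0 \to \G_m \to B_\alpha \xrightarrow{\ \pi\ } A \to 0,
\]
whose class in $\Ext^1_{K_v}(A,\G_m)\isom\Pic^0_{K_v}(A)$ is $[\alpha]$. A standard fact of homological algebra is that the canonical pairing $\Ext^1_{K_v}(A,\G_m)\times H^1(K_v,A)\to H^2(K_v,\G_m)$ recalled above, evaluated on $[\alpha]$ in the first variable, is, up to sign, the connecting map $\delta_\alpha\colon H^1(K_v,A)\to H^2(K_v,\G_m)=\Br(K_v)$ of the long exact sequence associated with this extension; hence $\Delta_{\alpha,v}$ is nothing but the restriction of $\delta_\alpha$ to the subgroup $TT(A/K_v)\subset H^1(K_v,A)$. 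Since $H^1(K_v,\G_m)=0$ by Hilbert's Theorem 90, that long exact sequence yields exactness of
\[
0 \to H^1(K_v,B_\alpha) \xrightarrow{\ \pi_*\ } H^1(K_v,A) \xrightarrow{\ \delta_\alpha\ } \Br(K_v),
\]
and the same argument over $K_v^{\ur}$ (again using $H^1(K_v^{\ur},\G_m)=0$) gives exactness of $0\to H^1(K_v^{\ur},B_\alpha)\to H^1(K_v^{\ur},A)$. Restriction along $\Gal(\overline{K_v}/K_v^{\ur})\subset\Gal(\overline{K_v}/K_v)$ organizes these into a commutative ladder, and, by the definitions recalled in this section, $TT(B_\alpha/K_v)$ and $TT(A/K_v)$ are the kernels of the vertical maps $H^1(K_v,B_\alpha)\to H^1(K_v^{\ur},B_\alpha)$ and $H^1(K_v,A)\to H^1(K_v^{\ur},A)$.

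The core of the proof is then a short diagram chase. Injectivity of $TT(B_\alpha/K_v)\to TT(A/K_v)$ follows from injectivity of $\pi_*$. For exactness at $TT(A/K_v)$: given $c\in TT(A/K_v)$ with $\delta_\alpha(c)=0$, choose $b\in H^1(K_v,B_\alpha)$ with $\pi_*(b)=c$; the image of $b$ in $H^1(K_v^{\ur},B_\alpha)$ maps to the image of $c$ in $H^1(K_v^{\ur},A)$, which is $0$ since $c$ is unramified, so the image of $b$ comes from $H^1(K_v^{\ur},\G_m)=0$ and is therefore itself $0$; thus $b\in TT(B_\alpha/K_v)$. Conversely, if $b\in TT(B_\alpha/K_v)$ then $\pi_*(b)$ restricts to $0$ over $K_v^{\ur}$, so $\pi_*(b)\in TT(A/K_v)$, and $\delta_\alpha(\pi_*(b))=0$ because it lies in the image of $\pi_*$. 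Hence $\ker\Delta_{\alpha,v}=TT(B_\alpha/K_v)$, which is the claimed exact sequence.

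The one genuinely delicate point — and the step I would treat most carefully — is the identification of $\Delta_{\alpha,v}$, defined through the $\Ext^1$--$H^1$ pairing, with the boundary map $\delta_\alpha$; once this is in place, the rest is formal. I would justify it either by invoking the compatibility of the Yoneda pairing with connecting homomorphisms directly from \cite{milne:duality}, or by computing both maps on explicit cocycles for the extension $0\to\G_m\to B_\alpha\to A\to 0$. It is worth observing that the argument uses only the vanishing of $H^1(-,\G_m)$ over $K_v$ and over $K_v^{\ur}$, and nothing about $\Br(K_v^{\ur})$; also, the hypothesis $[\alpha]\neq 0$ plays no role in the proof itself and is imposed only so that $B_\alpha$ is a nonsplit semiabelian variety.
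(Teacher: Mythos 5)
Your proof is correct and follows essentially the same route as the paper: both compare the long exact cohomology sequences of $0\to\G_m\to B_\alpha\to A\to 0$ over $K_v$ and over $K_v^{\ur}$, using $H^1(-,\G_m)=0$ in both settings, and extract the exact sequence on unramified kernels (the paper phrases this as taking kernels of the three inflation--restriction columns in a $3\times 3$ diagram; you do the equivalent chase by hand). Your side remark is accurate: the paper invokes $\Br(K_v^{\ur})=0$ only to identify $\Br(K_v^{\ur}/K_v)$ with $\Br(K_v)$, whereas your formulation maps directly to $\Br(K_v)$ and dispenses with that fact, and your care about identifying $\Delta_{\alpha,v}$ with the connecting homomorphism $\delta_\alpha$ makes explicit a step the paper leaves tacit.
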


\begin{proof}
We have an exact sequence
$$0 \to \G_m \to B_{\alpha} \to A \to 0$$
over $K_v$ that gives rise to the diagram
\[
\xymatrix{
&0\ar[d] &0\ar[d] &0\ar[d]\\
&H^1(K_v^{\ur}/K_v,B_{\alpha})\ar[d] &TT(A/K_v)\ar[d] &\Br(K_v^{\ur}/K_v)\ar[d]\\
0\ar[r] &H^1(K_v,B_{\alpha})\ar[r]\ar[d] &H^1(K_v,A)\ar[r]\ar[d] &H^2(K_v,\mathbb{G}_m)\isom\Br(K_v)\ar[d]\\
0\ar[r] &H^1(K_v^{\ur},B_{\alpha})\ar[r] &H^1(K_v^{\ur},A)\ar[r] &H^2(K_v^{\ur},\mathbb{G}_m)\isom\Br(K_v^{\ur})}
\]
with exact rows and columns. The exactness of the rows follow from the long exact sequence of cohomology associated
to the short exact sequence above (over both $K_v$ and $K_v^{\ur}$). The exactness of the columns follow from the inflation-restriction
sequence associated to the inclusion $K_v\subset K_v^{\ur}$. We also use the fact that $H^1(K_v,\G_m)=0=H^1(K_v^{\ur},\G_m)$. 
The kernels of the vertical maps yield an exact sequence
$$0 \to H^1(K_v^{\ur}/K_v,B_{\alpha}(K_v^{\ur})) \to TT(A/K_v) \overset{\Delta_{\alpha,v}}{\lra} \Br(K_v^{\ur}/K_v)$$
However, $\Br(K_v^{\ur})=0$ (\cite[\S X.7]{serre:localfields}) so that $\Br(K_v^{\ur}/K_v)\isom\Br(K_v)$. The desired exact sequence now
follows.
\end{proof}

\begin{remark}
The above proof shows in particular that $TT(B_{\alpha}/K_v)$ is finite and its order divides $c_{{\ssstyle{A}},v}$. We denote the
order of $TT(B_{\alpha}/K_v)$ by $c_{{\ssstyle{B_{\alpha}}},v}$.
\end{remark}

Thus we have an exact sequence
$$0 \to TT(B_{\alpha}/K_v) \to TT(A/K_v) \to \im{\Delta_{\alpha,v}} \to 0$$
where, $\im{\Delta_{\alpha,v}}$ is a finite subgroup of $\Br(K_v)$. We may interpret this subgroup in two ways. First, there is an isomorphism $$\Br(K_v)\isom\bigcup_{L_i}\left(\bigcup_{n}BS_{n}(L_i/K_v)\right)$$
where $L_i$ is any finite, unramified Galois extension of $K_v$ of degree $n$ and $BS_{n}(L/K_v)$ is the set of isomorphism classes of Severi-Brauer varieties defined over $K_v$ i.e. $K_v$-varieties $V$ which become isomorphic to $\mathbb{P}^{n-1}$ over $L$, i.e. 
$$V\tensor_{K_v}L\isom\mathbb{P}_{L}^{n-1}$$
The proposition thus shows that every Severi-Brauer variety arises from a Tamagawa torsor that depends upon the choice of $[\alpha]\in\Pic^0_{K_v}(A)$. On the other hand, we have 
$$\Br(K_v)=\bigcup_{L_i}\Br(L_i/K_v)$$
where, as before, the union is over all finite, unramified extension $L_i$ of $K_v$. Consider the isomorphisms
$$\Br(L_i/K_v)\isom{K_v^{\times}/\Norm(L_i^{\times})}\isom\Gal(L_i/K_v)$$
where the first isomorphism follows from the periodicity of the cohomology of cyclic groups, while the second from local class field theory.
Thus $\im{\Delta_{\alpha,v}}$ is a finite subgroup of $\Gal(L_i/K_v)$ for some $L_i$. In particular, there is a finite sub-extension $F_{\alpha}$ of $L_i$ such that $\im{\Delta_{\alpha,v}}\isom\Gal(L_i/F_{\alpha})$. The short exact sequence 
$$0 \to TT(B_{\alpha}/K_v) \to TT(A/K_v) \to \Gal(L_i/F_{\alpha})  \to 0$$
thus shows that every finite sub-extension of a finite, unramified extension of $K_v$ arises from an isomorphism class of Tamagawa torsors.

\begin{remark}
The global exact sequence relating $H^1(K,A)$ to $\Br(K)$ is similar to the local situation described in the proof of Proposition \ref{ttbr1}. We thus have a diagram
\[
\xymatrix{
&0\ar[d] &0\ar[d] &\\
&\Sha(C_{\alpha}/K)\ar[d] &\Sha(A/K)\ar[d] &0\ar[d]\\
0\ar[r] &H^1(K,C_{\alpha})\ar[r]\ar[d] &H^1(K,A)\ar[r]\ar[d] &\Br(K)\ar[d]\\
0\ar[r] &\displaystyle\bigoplus_{v}H^1(K_v,C_{\alpha})\ar[r] &\displaystyle\bigoplus_{v}H^1(K_v,A)\ar[r] &\displaystyle\bigoplus_{v}\Br(K_v)}
\]
In particular, we have $\Sha(C_{\alpha}/K)\isom\Sha(A/K)$.
\end{remark}

\section{Selmer Groups and Tamagawa Torsors}

In this section, we relate the Tamagawa torsors to Selmer groups. We begin with the local case. Consider the Kummer exact sequence
$$0 \to A(K_v)\otimes{\Z/n\Z} \to H^1(K_v,A[n]) \to H^1(K_v,A)[n] \to 0$$
over $K_v$ for any integer $n$. Passage to direct limits yields the exact sequence
$$0 \to A(K_v)\otimes{\Q/\Z} \to H^1(K_v,A_{\tor}) \to H^1(K_v,A) \to 0$$
Let $H^1_{TT}(K_v,A_{\tor})\subset H^1(K_v,A_{\tor})$ be the inverse image of $TT(A/K_v)\subset H^1(K_v,A)$ under the surjection
$H^1(K_v,A_{\tor}) \to H^1(K_v,A)$. We then have an exact sequence
$$0 \to A(K_v)\otimes{\Q/\Z} \to H^1_{TT}(K_v,A_{\tor}) \to TT(A/K_v) \to 0$$
Identifying the injective image of $A(K_v)\otimes{\Q/\Z}$ in $H^1(K_v,A_{\tor})$ (as well as in $H^1_{TT}(K_v,A_{\tor})$) as the
\emph{local Selmer group} $\Sel(A/K_v)$, we have thus proved that

\begin{proposition}
Let $H^1_{TT}(K_v,A_{\tor})$ be defined as above. Then there is an exact sequence
$$0 \to \Sel(A/K_v) \to H^1_{TT}(K_v,A_{\tor}) \to TT(A/K_v) \to 0$$
In particular, the index of $\Sel(A/K_v)$ in $H^1_{TT}(K_v,A_{\tor})$ is equal to the
Tamagawa number of $A$ at $v$.
\end{proposition}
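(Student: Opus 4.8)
The plan is to obtain the sequence by restricting the limit Kummer sequence to the subgroup $TT(A/K_v)$. First I would record the Kummer exact sequence $0 \to A(K_v)\otimes \Z/n\Z \to H^1(K_v,A[n]) \to H^1(K_v,A)[n] \to 0$ for each $n$ and pass to the direct limit over $n$ ordered by divisibility. Since filtered colimits are exact, $\varinjlim_n A[n]=A_{\tor}$, $\varinjlim_n \bigl(A(K_v)\otimes\Z/n\Z\bigr)=A(K_v)\otimes\Q/\Z$, and $\varinjlim_n H^1(K_v,A)[n]=H^1(K_v,A)$ (the latter because $H^1(K_v,A)$ is torsion), this yields the short exact sequence $0 \to A(K_v)\otimes\Q/\Z \to H^1(K_v,A_{\tor}) \to H^1(K_v,A) \to 0$ displayed above. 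Write $\pi\colon H^1(K_v,A_{\tor})\twoheadrightarrow H^1(K_v,A)$ for the surjection, so that $\Ker\pi$ is the injective image of $A(K_v)\otimes\Q/\Z$; this image is by definition the local Selmer group $\Sel(A/K_v)$, i.e. the subgroup of $H^1(K_v,A_{\tor})$ cut out by the local Kummer map.

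Next, by construction $H^1_{TT}(K_v,A_{\tor})=\pi^{-1}\bigl(TT(A/K_v)\bigr)$. Restricting $\pi$ to this preimage gives a surjection $H^1_{TT}(K_v,A_{\tor})\twoheadrightarrow TT(A/K_v)$ whose kernel is again $\Ker\pi=\Sel(A/K_v)$, since $\Ker\pi\subset H^1_{TT}(K_v,A_{\tor})$. A one-line diagram chase (equivalently, pulling the short exact sequence back along the inclusion $TT(A/K_v)\hookrightarrow H^1(K_v,A)$) then produces the desired exact sequence $0 \to \Sel(A/K_v) \to H^1_{TT}(K_v,A_{\tor}) \to TT(A/K_v) \to 0$. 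In particular $H^1_{TT}(K_v,A_{\tor})/\Sel(A/K_v)\isom TT(A/K_v)$, so the index $[H^1_{TT}(K_v,A_{\tor}):\Sel(A/K_v)]$ equals $\#TT(A/K_v)$, which is the Tamagawa number $c_{A,v}$ by Theorem \ref{ttorder}; in particular this index is finite, so the statement is meaningful.

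The only point demanding genuine care is the passage to the limit together with the identification of $\Ker\pi$ with $\Sel(A/K_v)$: one must know that $H^1(K_v,A)$ is torsion so that the limits behave, and one must adopt (or recall) the description of the local Selmer group as the image of $A(K_v)\otimes\Q/\Z$ inside $H^1(K_v,A_{\tor})$. Once the short exact sequence $0\to A(K_v)\otimes\Q/\Z \to H^1(K_v,A_{\tor}) \to H^1(K_v,A)\to 0$ is in hand and the finiteness $\#TT(A/K_v)=c_{A,v}$ is invoked from Theorem \ref{ttorder}, everything else is formal.
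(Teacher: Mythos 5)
Your argument is correct and is essentially identical to the paper's: both pass to the direct limit of the Kummer sequence to get $0 \to A(K_v)\otimes\Q/\Z \to H^1(K_v,A_{\tor}) \to H^1(K_v,A) \to 0$, pull back along $TT(A/K_v)\hookrightarrow H^1(K_v,A)$ to cut out $H^1_{TT}(K_v,A_{\tor})$, identify the kernel with $\Sel(A/K_v)$ by definition, and then invoke Theorem \ref{ttorder} for the index. The extra care you take in justifying the limit (torsion of $H^1(K_v,A)$, exactness of filtered colimits) is a welcome fleshing-out of what the paper leaves implicit, but it is not a different route.
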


The global Kummer exact sequence is related to the local one by means of the commutative diagram

\xymatrix{
0\ar[r] &A(K)\tensor{\Z/n\Z}\ar[r]\ar[d] &H^1(K,A[n])\ar[r]\ar[d]\ar[rd] &H^1(K,A)[n]\ar[r]\ar[d] &0\\
0\ar[r] &\displaystyle\prod_{v}A(K_v)\tensor{\Z/n\Z}\ar[r] &\displaystyle\prod_{v}H^1(K_v,A[n])\ar[r] 
&\displaystyle\prod_{v}H^1(K_v,A)[n]\ar[r] &0}

where the vertical arrows are induced by the inclusions $\Gal(\overline{K}/K)\subset\Gal(\overline{K_v}/K_v)$ and 
$A(\overline{K})\subset A(\overline{K_v})$ 
for every $v$. The \emph{global $n$-Selmer group}, denoted by $\Sel_n(A/K)$, is the kernel of the diagonal map in the above diagram
so that there is an exact sequence
$$0 \to \Sel_{n}(A/K) \to H^1(K,A[n]) \to \displaystyle\prod_{v}H^1(K_v,A)[n]$$
Let $H^1_{TT}(K,A[n])$ be the subgroup of $H^1(K,A[n])$ that maps to $\bigoplus_{v}TT(A/K_v)[n]\subseteq\prod_{v}H^1(K_v,A)[n]$ under the map above.
There is thus an exact sequence
$$0 \to H^1_{TT}(K,A[n]) \to H^1(K,A[n]) \to \displaystyle\prod_{v}H^1(K_v^{\ur},A)[n]$$

\begin{proposition}
There is an exact sequence
$$0 \to \Sel_{n}(A/K) \to H^1_{TT}(K,A[n]) \to \displaystyle\bigoplus_{v}TT(A/K_v)[n]$$
\end{proposition}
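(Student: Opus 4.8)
The plan is to read the sequence off directly from the definitions, viewing the map $H^1_{TT}(K,A[n]) \to \bigoplus_{v}TT(A/K_v)[n]$ as the restriction to $H^1_{TT}(K,A[n])$ of the localization map whose kernel defines the Selmer group. Write $\phi\colon H^1(K,A[n]) \to \prod_{v}H^1(K_v,A)[n]$ for the composite of the restriction maps with the local Kummer surjections $H^1(K_v,A[n]) \to H^1(K_v,A)[n]$, so that, by the definitions given just above, $\Sel_n(A/K) = \ker\phi$ and $H^1_{TT}(K,A[n]) = \phi^{-1}\bigl(\bigoplus_{v}TT(A/K_v)[n]\bigr)$.

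First I would note that the trivial $A$-torsor is split by every extension, in particular by every unramified extension, so $0 \in TT(A/K_v)[n]$ for every $v$; hence $\Sel_n(A/K) = \phi^{-1}(0) \subseteq \phi^{-1}\bigl(\bigoplus_{v}TT(A/K_v)[n]\bigr) = H^1_{TT}(K,A[n])$, which yields the injection $\Sel_n(A/K) \hra H^1_{TT}(K,A[n])$ of the asserted sequence. Next, let $\phi'$ be the restriction of $\phi$ to $H^1_{TT}(K,A[n])$; by the very definition of $H^1_{TT}(K,A[n])$ it takes values in $\bigoplus_{v}TT(A/K_v)[n]$, so it supplies the second arrow. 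Finally I would compute the kernel: $\ker\phi' = \ker\phi \cap H^1_{TT}(K,A[n]) = \Sel_n(A/K) \cap H^1_{TT}(K,A[n]) = \Sel_n(A/K)$, the last equality by the inclusion just established. This is precisely exactness at $H^1_{TT}(K,A[n])$, and the sequence is proved.

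There is no genuine obstacle here: the statement amounts to unwinding the definitions of $\Sel_n(A/K)$ and of $H^1_{TT}(K,A[n])$ and tracking a single kernel. The only mild point worth recording is the $\bigoplus$ versus $\prod$ distinction in the target — for a global class the image under $\phi$ already lies in $\bigoplus_{v}H^1(K_v,A)[n]$, since it is unramified at almost all places and $TT(A/K_v)$ vanishes wherever $A$ has good reduction (the Remark after Theorem~\ref{ttorder}) — but since $H^1_{TT}(K,A[n])$ is defined as the preimage of the direct sum, this is automatic in the present argument. If one wished to go further and identify the cokernel of $\phi'$, the natural tool would be the snake lemma applied to the commutative diagram assembled from the global Kummer sequence together with the local exact sequences of the preceding Proposition relating $\Sel(A/K_v)$, $H^1_{TT}(K_v,A_{\tor})$ and $TT(A/K_v)$; but that refinement is not needed for the stated exactness.
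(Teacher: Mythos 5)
Your proof is correct, and it is essentially the paper's proof in unrolled form: the paper simply cites the kernel--cokernel exact sequence (Milne, \emph{Arithmetic Duality Theorems}, I~0.24) applied to the pair of maps $H^1(K,A[n]) \to \prod_{v}H^1(K_v,A)[n] \to \prod_{v}H^1(K_v^{\ur},A)[n]$, whose first three terms are precisely $0 \to \ker f \to \ker(gf) \to \ker g$, i.e.\ the sequence you verify by hand. Your explicit remark that the image of a class in $H^1_{TT}(K,A[n])$ lands in the direct sum (because $TT(A/K_v)$ vanishes at places of good reduction) is a point the paper leaves implicit when it replaces $\prod$ by $\bigoplus$, so if anything your write-up is slightly more careful.
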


\begin{proof}
Apply the kernel-cokernel exact sequence (\cite{milne:duality} I 0.24) to the pair of maps
$$H^1(K,A[n]) \to \displaystyle\prod_{v}H^1(K_v,A)[n] \to \displaystyle\prod_{v}H^1(K_v^{\ur},A)[n]$$
\end{proof}

Let $H^1_{TT}(K,A_{\tor})$ be the direct sum of the groups $H^1_{TT}(K,A[n])$ and the Selmer group $\Sel(A/K)$ be that of the groups $\Sel_{n}(A/K)$ over all $n$. We find that

\begin{corollary}
The index of the Selmer group $\Sel(A/K)$ in $H^1_{TT}(K,A_{\tor})$ divides $\displaystyle\prod_{v}c_{\ssstyle{A},v}$.
\end{corollary}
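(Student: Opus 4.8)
The plan is to obtain the statement from the exact sequence of the preceding proposition by passing to a limit over $n$. Fix an integer $n$. That proposition gives an exact sequence
$$0 \to \Sel_n(A/K) \to H^1_{TT}(K,A[n]) \to \bigoplus_v TT(A/K_v)[n],$$
so the quotient $H^1_{TT}(K,A[n])/\Sel_n(A/K)$ embeds into $\bigoplus_v TT(A/K_v)[n]$. By Theorem \ref{ttorder} each $TT(A/K_v)$ is finite of order $c_{\ssstyle{A},v}$, and by the remark following that theorem it is trivial whenever $A$ has good reduction at $v$; hence $\bigoplus_v TT(A/K_v)$ is a \emph{finite} abelian group of order $\prod_v c_{\ssstyle{A},v}$, and its subgroup $\bigoplus_v TT(A/K_v)[n]$ has order dividing $\prod_v c_{\ssstyle{A},v}$. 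Therefore $[H^1_{TT}(K,A[n]):\Sel_n(A/K)]$ divides $\prod_v c_{\ssstyle{A},v}$ for every $n$.

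Next I would take the direct limit over $n$, ordered by divisibility, with transition maps induced by the inclusions $A[n]\hra A[m]$ for $n\mid m$. These are compatible with all three terms of the sequence above, they carry $\Sel_n(A/K)$ into $\Sel_m(A/K)$ and $H^1_{TT}(K,A[n])$ into $H^1_{TT}(K,A[m])$, and since filtered colimits of abelian groups are exact one obtains an exact sequence
$$0 \to \Sel(A/K) \to H^1_{TT}(K,A_{\tor}) \to \bigoplus_v TT(A/K_v),$$
using that $\varinjlim_n \bigoplus_v TT(A/K_v)[n] = \bigoplus_v TT(A/K_v)$: the right-hand group is finite, so for $n$ divisible by its exponent one has $TT(A/K_v)[n]=TT(A/K_v)$ for every $v$ and the colimit has already stabilized. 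In particular $\Sel(A/K)\subseteq H^1_{TT}(K,A_{\tor})$, and the quotient $H^1_{TT}(K,A_{\tor})/\Sel(A/K)$ injects into the finite group $\bigoplus_v TT(A/K_v)$ of order $\prod_v c_{\ssstyle{A},v}$. Hence $[H^1_{TT}(K,A_{\tor}):\Sel(A/K)]$ divides $\prod_v c_{\ssstyle{A},v}$, as claimed.

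The argument is essentially formal once the preceding proposition is available; the only point needing a little care is the interaction of the direct limit with the exact sequence — checking that the transition maps respect $H^1_{TT}$ and the $\Sel_n$, and that the target stabilizes to the finite group $\bigoplus_v TT(A/K_v)$ rather than growing with $n$. That is where I would be most attentive, but it presents no genuine obstacle. Alternatively one can avoid limits entirely: by construction $H^1_{TT}(K,A_{\tor})/\Sel(A/K)$ is the union over $n$ of the images of the finite quotients $H^1_{TT}(K,A[n])/\Sel_n(A/K)$, which sit compatibly inside the finite group $\bigoplus_v TT(A/K_v)$; this union is therefore attained at a finite stage, and each stage has order dividing $\prod_v c_{\ssstyle{A},v}$, which gives the bound.
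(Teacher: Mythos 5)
Your proposal is correct and follows exactly the paper's approach: the paper's proof is the one-line instruction ``pass on to the direct limit of the sequence in the proposition,'' and you carry out precisely that limit, with the additional (sound) observation that $\bigoplus_v TT(A/K_v)[n]$ stabilizes to the finite group $\bigoplus_v TT(A/K_v)$ of order $\prod_v c_{\ssstyle{A},v}$.
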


\begin{proof}
Pass on to the direct limit of the sequence in the proposition.
\end{proof}

\begin{remark}
The results proved so far show that there is a commutative diagram
\[
\xymatrix{
& &0\ar[d] &0\ar[d] &\\
0\ar[r] &A(K)\tensor{\Q/\Z}\ar[r]\ar@{=}[d] &\Sel(A/K)\ar[r]\ar[d] &\Sha(A/K)\ar[r]\ar[d] &0\\
0\ar[r] &A(K)\tensor{\Q/\Z}\ar[r] &H^1_{TT}(K,A_{\tor})\ar[r]\ar[d] &H^1_{\ur}(K,A)\ar[r]\ar[d] &0\\
& &\displaystyle\bigoplus_{v}TT(A/K_v)\ar@{=}[r] &\displaystyle\bigoplus_{v}TT(A/K_v) &}
\]
\end{remark}
that relates the Mordell-Weil group, the Selmer group, the Shafarevich-Tate group and the Tamagawa torsors of $A$. Here
$H^1_{\ur}(K,A)\isom H^1(X,\cA)$ is the subgroup of $H^1(K,A)$ that maps to $H^1(K_v^{\ur}/K_v,A(K_v^{\ur}))\subseteq H^1(K_v,A)$
for every $v$. In particular, the index of $\Sel(A/K)$ in $H^1_{TT}(K,A_{\tor})$ as well as that of $\Sha(A/K)$ in $H^1_{\ur}(K,A)$
divide $\prod_{v}c_{\ssstyle{A},v}$, the product of the Tamagawa numbers of $A$.

\section{Visibility of Tamagawa Torsors}

Let $\iota:A\hra J$ be an embedding\footnote{i.e. a morphism that is also a closed immersion} of abelian varieties over $K_v$. The kernel of the induced map $H^1(K_v,A)\to H^1(K_v,J)$ may be 
defined, following \cite{cremona-mazur}, as the \emph{visible} subgroup of $H^1(K_v,A)$ with respect to the embedding $\iota$ and denoted 
by $\Vis_{J}H^1(K_v,A)$\footnote{Although the visible subgroup depends on the choice of embedding, it is usually clear from the context
and is therefore omitted from the notation} i.e.
$$\Vis_{J}H^1(K_v,A):=\ker(H^1(K_v,A)\to H^1(K_v,J))$$
The terminology can be explained by noting that if $C=J/A$ then, over $K_v$, there is a short exact sequence
$$0 \to A \to J \overset{\pi}{\to} C \to 0$$
of $\Gal(\overline{K_v}/K_v)$-modules which induces a long exact sequence of cohomology groups
$$0 \to A(K_v) \to J(K_v) \to C(K_v) \to H^1(K_v,A) \to H^1(K_v,J) \to \cdots$$
which can be truncated to the exact sequence
$$0 \to J(K_v)/A(K_v) \to C(K_v) \to \Vis_{J}H^1(K_v,A) \to 0$$ 
Let $\xi\in\Vis_{J}H^1(K_v,A)$ be the image of $P\in C(K_v)$. Then $\pi^{-1}(P)$ is a coset of $A$ in $J$, 
and thus is a torsor under $A$. This explains how elements in $\Vis_{J}H^1(K_v,A)$ are `visible' in $J(\overline{K_v})$.\footnote{Considering
the embedding $A\hra J$ over the number field $K$, it can be shown (see \cite{cremona-mazur}) that $\Vis_{J}H^1(K,A)$ is finite}
Clearly we can define $\Vis_{J}TT(A/K_v)$, the visible part of the Tamagawa torsors, as 
$$\Vis_{J}TT(A/K_v):=\Vis_{J}H^1(K_v,A)\cap TT(A/K_v)$$
Let $L_i$ be a finite, unramified Galois extension of $K_v$ and let 
\mbox{$\Res_{L_i/K_v}(A_{L_i})$} be the \emph{restriction of scalars} of $A_{L_i}$ from $L_i$ to $K_v$.
In particular, it is an abelian variety over $K_v$ of dimension $[L_i:K_v].\dim(A)$

\begin{proposition}
Every element in $TT(A/K_v)$ is visible in $\Res_{L_i/K_v}(A_{L_i})$ for some finite, unramified Galois extension
$L_i$ of $K_v$.
\end{proposition}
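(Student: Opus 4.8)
The plan is to exploit the standard adjunction (Shapiro's lemma) for restriction of scalars together with the explicit description of $TT(A/K_v)$ obtained in Section 2. First I would recall that for a finite separable extension $L_i/K_v$ and the abelian variety $A_{L_i}$ over $L_i$, there is a canonical isomorphism
$$H^1(K_v,\Res_{L_i/K_v}(A_{L_i}))\isom H^1(L_i,A_{L_i}),$$
and under this identification the map $H^1(K_v,A)\to H^1(K_v,\Res_{L_i/K_v}(A_{L_i}))$ induced by the natural embedding $\iota:A\hra\Res_{L_i/K_v}(A_{L_i})$ (the adjoint of the identity $A_{L_i}\to A_{L_i}$) is precisely the restriction map $\res_{L_i/K_v}:H^1(K_v,A)\to H^1(L_i,A)$. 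Consequently
$$\Vis_{\Res_{L_i/K_v}(A_{L_i})}H^1(K_v,A)=\ker\!\left(H^1(K_v,A)\overset{\res}{\to}H^1(L_i,A)\right)=H^1(L_i/K_v,A(L_i)),$$
the last equality by inflation--restriction.

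Next I would combine this with Proposition \ref{ttprop} and Theorem \ref{ttorder}. From Theorem \ref{ttorder} we have $TT(A/K_v)\isom H^1(k_v,\Phi_{A,v})=\varinjlim_{L_i}H^1(l_i/k_v,\Phi_{A,v}(l_i))$, and since $TT(A/K_v)$ is \emph{finite} of order $c_{A,v}$, the directed system stabilizes: there exists a single finite unramified Galois extension $L_0/K_v$ (equivalently a finite extension $l_0/k_v$ of the residue field) with
$$H^1(l_0/k_v,\Phi_{A,v}(l_0))\isom H^1(k_v,\Phi_{A,v})\isom TT(A/K_v).$$
Indeed, one may take $[l_0:k_v]$ to be any multiple of the exponent (or the order) of $\Phi_{A,v}(\kbar_v)$ that also kills the Galois action on $\Phi_{A,v}$, e.g.\ an extension over which $\Phi_{A,v}$ becomes a constant group; since $k_v$ is finite such an $L_0$ exists. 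By Proposition \ref{ttprop}, $H^1(L_0/K_v,A(L_0))\isom H^1(l_0/k_v,\Phi_{A,v}(l_0))$, and this group already injects into $H^1(K_v,A)$ with image exactly $TT(A/K_v)$.

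Putting the two computations together: with $L_i=L_0$,
$$\Vis_{\Res_{L_0/K_v}(A_{L_0})}H^1(K_v,A)=H^1(L_0/K_v,A(L_0))=TT(A/K_v)\supseteq TT(A/K_v),$$
so every element of $TT(A/K_v)$ is visible in $\Res_{L_0/K_v}(A_{L_0})$, as claimed. The step I expect to require the most care is verifying that the natural map $H^1(K_v,A)\to H^1(K_v,\Res_{L_0/K_v}(A_{L_0}))$ coincides, under Shapiro's isomorphism, with honest Galois restriction $\res_{L_0/K_v}$ rather than with some twist of it; this is a standard but slightly fiddly diagram chase with the counit of the adjunction (or, equivalently, can be seen on the level of $\Gal(\Kbar_v/K_v)$-modules via $\Res_{L_0/K_v}(A_{L_0})(\Kbar_v)\isom\operatorname{Ind}_{G_{L_0}}^{G_{K_v}}A(\Kbar_v)$ and Shapiro's lemma). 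A minor secondary point is to confirm the finiteness/stabilization claim for the directed limit, which follows immediately from $\#TT(A/K_v)=c_{A,v}<\infty$ together with the fact that every class in $H^1(k_v,\Phi_{A,v})$ is split by a single finite extension of $k_v$.
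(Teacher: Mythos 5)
Your proof is correct and follows essentially the same route as the paper: both hinge on Shapiro's lemma identifying $H^1(K_v,\Res_{L_i/K_v}(A_{L_i}))$ with $H^1(L_i,A)$, and on inflation--restriction to compute the visible subgroup as $H^1(L_i/K_v,A(L_i))$. The only difference is the final step: the paper passes to the direct limit over all finite unramified $L_i$ and identifies the union of visible subgroups with $TT(A/K_v)$, whereas you invoke the finiteness of $TT(A/K_v)$ (Theorem \ref{ttorder}) to stabilize the directed system and produce a single uniform $L_0$ in whose restriction of scalars every Tamagawa torsor is visible --- a harmless, slightly stronger conclusion.
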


\begin{proof}
There is a canonical embedding $A \hra \Res_{L_i/K_v}(A_{L_i})$ of abelian varieties over $K_v$ 
which induces a map 
$$H^1(K_v,A) \to H^1(K_v,\Res_{L_i/K_v}(A_{L_i}))$$ 
We thus have an exact sequence 
$$0 \to \Vis_{\Res_{L_i/K_v}(A_{L_i})}(H^1(K_v,A)) \to H^1(K_v,A) \to H^1(K_v,\Res_{L_i/K_v}(A_{L_i}))$$
On the other hand, the inflation-restriction sequence with respect to the extension $L_i/K_v$ is 
$$0 \to H^1(L_i/K_v,A(L_i)) \to H^1(K_v,A) \to H^1(L_i,A)$$
A straightforward application of Shapiro's lemma (\cite{serre:galois}) implies that there is an isomorphism 
$$H^1(K_v,\Res_{L_i/K_v}(A_{L_i})) \isom H^1(L_i,A)$$
It follows that we have an isomorphism 
$$\Vis_{\Res_{L_i/K_v}(A_{L_i})}(H^1(K_v,A)) \isom H^1(L_i/K_v,A(L_i))$$
Upon passage to the direct limit over such $L_i$s, we obtain isomorphisms
\begin{align*}
\varinjlim\Vis_{\Res_{L_i/K_v}(A_{L_i})}(H^1(K_v,A)) &\isom \varinjlim H^1(L_i/K_v,A(L_i))\\ 
&\isom H^1(K_v^{\ur}/K_v,A(K_v^{\ur}))\\ 
&\isom TT(A/K_v)
\end{align*}
where the last isomorphism follows from the proof of Theorem \ref{ttorder}.
\end{proof}

Having described the ambient variety (i.e. $\Res_{L_i/K_v}(A_{L_i})$) in which any Tamagawa torsor is always visible, we now give a method 
by means of which Tamagawa torsors of $A$ may be interpreted as $K$-rational points on another variety $B$ with which it shares a certain $p$-congruence. The following is a variation of the main theorem proved in \cite{agashe-biswas}

\begin{theorem}
Let $A$ and $B$ be abelian varieties of the same dimension over a number field $K$, having ranks $r_{\ssstyle{A}}=0$ and $r_{\ssstyle{B}}>0$ respectively and such that $B$ has semistable reduction over $K$. Let $N$ be an integer divisible by the residue characteristics of the primes of bad reduction for both $A$ and $B$. Let $p$ be an odd prime such that $e_p<p-1$, where $e_p$ is the largest ramification index of any prime of $K$ lying over $p$, and such that
$$gcd\left(p,\;N\cdot\;\#A(K)_{\tor}\cdot\;\prod_{v}c_{\scriptscriptstyle{B},v}\right)=1$$
Suppose further that $B[p]\isom A[p]$ over $K$. Assuming that $\Sha(A/K)$ has trivial $p$-primary components, there is an injection
$$B(K)/pB(K) \hra \bigoplus_{v}TT(A/K_v)[p]$$
\end{theorem}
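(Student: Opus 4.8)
The plan is to carry out the visibility construction behind the main theorem of \cite{agashe-biswas}, with $\Sha(A/K)[p]$ replaced throughout by $\bigoplus_v TT(A/K_v)[p]$. Since $\dim A=\dim B$, the hypothesis $B[p]\isom A[p]$ furnishes an isomorphism $\iota\colon A[p]\isom B[p]$ of $\Gal(\Kbar/K)$-modules; let $\Delta=\{(x,-\iota(x)):x\in A[p]\}\subseteq A\times B$ be its antigraph and put $C=(A\times B)/\Delta$. The two projections produce short exact sequences of abelian varieties over $K$,
\[
0\to A\xrightarrow{\ \iota_A\ }C\xrightarrow{\ \pi_B\ }B\to 0,\qquad 0\to B\xrightarrow{\ \iota_B\ }C\xrightarrow{\ \pi_A\ }A\to 0,
\]
and a direct computation of the quotients shows $\pi_A\circ\iota_A=[p]_A$ and $\pi_B\circ\iota_B=[p]_B$. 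Let $\delta\colon B(K)\to H^1(K,A)$ be the connecting map of the first sequence, so $\im\delta=\ker(H^1(K,A)\to H^1(K,C))=\Vis_C H^1(K,A)$. The factorization $[p]_A=\pi_A\circ\iota_A$ forces this kernel into $H^1(K,A)[p]$; hence $\im\delta\subseteq H^1(K,A)[p]$, $\delta$ annihilates $pB(K)$, and $\delta$ induces $\bar\delta\colon B(K)/pB(K)\to H^1(K,A)[p]$.

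The first point is that $\bar\delta$ is injective, i.e. $\ker\delta=pB(K)$. For $x\in\ker\delta$, lift to $\widetilde x\in C(K)$ and set $a=\pi_A(\widetilde x)\in A(K)$; since $A$ has rank $0$ and $p\nmid\#A(K)_{\tor}$, the group $A(K)$ is finite of order $n$ coprime to $p$, so $na=0$ and $n\widetilde x\in\ker\pi_A=\iota_B(B(K))$. Writing $n\widetilde x=\iota_B(y)$ and applying $\pi_B$ (using $\pi_B\circ\iota_B=[p]_B$) gives $nx=py$ in $B(K)$, and a B\'ezout relation $1=\alpha n+\beta p$ gives $x=p(\alpha y+\beta x)\in pB(K)$.

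The crux is to prove that $\delta(x)$ is unramified at every prime $v$, i.e. $\delta(x)\in H^1_{\ur}(K,A)$; equivalently, the image of $x$ in $B(K_v^{\ur})$ lifts to $C(K_v^{\ur})$. I would argue by cases. If $v\nmid p$: the N\'eron model $\cB$ of $B$ is semistable at $v$, so $\cB^0$ has semi-abelian special fibre and $\cB^0(\O_{K_v^{\ur}})$ is $p$-divisible (its formal group part is uniquely $p$-divisible since $v\nmid p$, and the semi-abelian special fibre is $p$-divisible over the algebraically closed residue field), while the finite group $\Phi_{B,v}(k_v)$ has order $c_{B,v}$ prime to $p$ and hence is contained in $p\,\Phi_{B,v}(\kbar_v)$; chasing the component-group exact sequence then gives $B(K_v)\subseteq p\,B(K_v^{\ur})$, so the image of $x$ in $B(K_v^{\ur})$ equals $py$ for some $y$, and, as the connecting map over $K_v^{\ur}$ is again $p$-torsion-valued, the restriction of $\delta(x)$ to $K_v^{\ur}$ is $p\cdot\delta^{\ur}(y)=0$. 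If $v\mid p$: the hypothesis $\gcd(p,N)=1$ forces $A$ and $B$ to have good reduction at $v$, and since $e_p<p-1$ Raynaud's theorem extends $\iota$ to an isomorphism $\cA[p]\isom\cB[p]$ of finite flat $\O_{K_v}$-group schemes; hence $\Delta$ extends to a finite flat subgroup scheme of $\cA\times\cB$ and $C$ has good reduction at $v$, so $0\to\cA\to\cC\to\cB\to 0$ is exact over $\O_{K_v}$ and $C(K_v^{\ur})\to B(K_v^{\ur})$ is surjective, the obstruction lying in $H^1_{\text{\'et}}(\O_{K_v^{\ur}},\cA)$, which vanishes over the strictly henselian base. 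This local step — producing unramifiedness from two unrelated mechanisms, Raynaud's theorem together with $e_p<p-1$ at the places above $p$, and semistability of $B$ together with $p\nmid\prod_v c_{B,v}$ away from $p$ — is the main obstacle, and it is where the hypotheses on $N$ and $e_p$ must be deployed with care.

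Putting the pieces together: by the left-exact sequence $0\to\Sha(A/K)\to H^1_{\ur}(K,A)\to\bigoplus_v TT(A/K_v)$ of Section 3, taking $p$-torsion and using $\Sha(A/K)[p]=0$ yields an injection $H^1_{\ur}(K,A)[p]\hra\bigoplus_v TT(A/K_v)[p]$. Composing this with $\bar\delta$ — injective by the first step, and with image in $H^1_{\ur}(K,A)[p]$ by the local analysis — produces the desired injection $B(K)/pB(K)\hra\bigoplus_v TT(A/K_v)[p]$.
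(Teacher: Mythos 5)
Your proof is correct and follows essentially the same path the paper intends, which it delegates to \cite{agashe-biswas}: you spell out the visibility construction $C=(A\times B)/\Delta$, the injectivity of $\bar\delta$, and the prime-by-prime unramifiedness argument that together realize the paper's map $\varphi\colon B(K)/pB(K)\to H^1(X,\cA)[p]\isom H^1_{\ur}(K,A)[p]$, and you finish with the same exact sequence $0\to\Sha(A/K)\to H^1(X,\cA)\to\bigoplus_v TT(A/K_v)$ from Section~3. The only cosmetic difference is that the paper phrases the key step as a global extension $\cA[p]\isom\cB[p]$ over $\Spec\O_K$, while you verify unramifiedness locally at each $v$; these amount to the same thing.
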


\begin{proof}
We briefly sketch the proof, referring to \cite{agashe-biswas} for details. The isomorphism $A[p]\isom B[p]$ over $K$ induces an isomorphism $\cA[p]\isom\cB[p]$ over $X=\Spec{\O_K}$, where $\cA$ and $\cB$ are the corresponding N{\'e}ron models (this is the heart of the proof as given in \cite{agashe-biswas}). It then follows, given the conditions of the theorem, that we have a diagram 
 $$\xymatrix{
& 0\ar[d]\\
& \Sha(A/K)[p]\ar[d]\\
B(K)/pB(K)\ar[r]^{\varphi} & H^1(X,\cA)[p]\ar[d]\\
& \displaystyle\bigoplus_{v}H^1(\Spec{k_v},\Phi_{A,v})[p]}$$
such that $\ker(\varphi)=0$. However, $\Sha(A/K)[p]$ is trivial by the conditions of the theorem and $H^1(\Spec{k_v},\Phi_{A,v})[p]\isom TT(A/K_v)[p]$ (see the proof of Theorem \ref{ttorder}). The desired result now follows.
\end{proof}

We may also say, in the language of \cite{cremona-mazur}, that $TT(A/K_v)$ is `explained' by $B(K)$. We now reinterpret the example discussed in \cite{agashe-biswas}, which was first discovered in \cite{stein:tamagawa}. Consider the optimal elliptic curves $A$=114C1 and $B$=57A1. The data in \cite{cremona:algs} shows that $A$ has rank $0$, \mbox{$A(\Q)_{\tor}\isom{\Z/4\Z}$} and $\Sha(A/\Q)$ has trivial conjectural order. On the other hand, we find that $B(\Q)\isom\Z$ and $\prod_{l}c_{\ssstyle{B},l}=2$. Furthermore, we have $A[5]\isom B[5]$ over $\Q$. Thus, the triple $(A,B,5)$ satisfies the hypothesis in the above theorem and we conclude that there is an injection $\Z/5\Z\hra{\bigoplus_{p}TT(A/\Q_p)}$ i.e. $A$ has a Tamagawa torsor of order $5$. This agrees with the available data, according to which $\prod_{l}c_{\ssstyle{A},l}=20$.

\begin{remark}
For an optimal elliptic curve $A/\Q$ of rank $0$, the second part of the BSD Conjecture states that
$$\frac{L_{A,\Q}(1)}{\Omega_{A}}.(\#A(\Q)_{\tor})^{2}\overset{?}{=}\#\Sha(A/\Q).\prod_{p}c_{\ssstyle{A},p}$$
where $L_{A,\Q}(s)$ is the \emph{$L$-function} asociated to $A/\Q$, $\Omega_{A}$ is the \emph{real volume} of $A$ computed using a N{\'e}ron differential. Under the conditions of the above theorem, Agashe (\cite{agashe:visfac} Prop 1.5) has shown that $p$ divides the left-hand side of the BSD formula. Since $\Sha(A/\Q)$ is assumed to have trivial $p$-torsion, it follows that $p$ must divide the Tamagawa numbers of $A$. With our interpretation of the Tamagawa number as the number of Tamagawa torsors, this implies that $A$ must have a Tamagawa torsor of order $p$ which is precisely what the theorem confirms. Thus one may also view the above theorem as providing theoretical evidence for the BSD Conjecture.
\end{remark}

\newpage
\newcommand{\etalchar}[1]{$^{#1}$}

\end{document}